%
%
%
%

\documentclass[a4paper,11pt]{article}

\usepackage[english]{babel}

\usepackage{ae,aecompl}

\usepackage{graphicx}
\graphicspath{{Figures/}}

\usepackage{color}

\usepackage{amsmath}
\usepackage{amsthm}
\usepackage{amssymb}

\usepackage{psfrag}

\theoremstyle{plain}
\newtheorem{thm}{Theorem}
\newtheorem{prop}[thm]{Proposition}
\newtheorem{lemma}[thm]{Lemma}
\newtheorem{cor}[thm]{Corollary}

\theoremstyle{definition}

\theoremstyle{remark}

\newcommand{\refthm}[1]{Theorem~\ref{#1}}
\newcommand{\refprop}[1]{Proposition~\ref{#1}}
\newcommand{\reflemma}[1]{Lemma~\ref{#1}}
\newcommand{\refcor}[1]{Corollary~\ref{#1}}

\newcommand{\refsection}[1]{Section~\ref{#1}}

\newcommand{\reffig}[1]{Figure~\ref{#1}}
\newcommand{\junk}[1]{}


\newcommand{\twlog}{w.l.o.g.\ }



\newcommand{\vef}[1]{\mathop{\mathbf{P}_{#1}}}
\newcommand{\vf}[1]{\mathop{\mathbf{Q}_{#1}}}

\newcount\shortyear\newcount\shorthour\newcount\shortminute
\shorthour=\time\divide\shorthour by 60\shortyear=\shorthour
\multiply\shortyear by 60\shortminute=\time\advance\shortminute by
-\shortyear \shortyear=\year\advance\shortyear by -1900

\def\zeit{\number\shorthour:\ifnum\shortminute<10 0\number\shortminute
\else\number\shortminute\fi}




\def\ni{\noindent}

\def\PP{\mathbf{P}}
\def\cR{\mathcal{R}}

\def\Claim#1.{\medbreak\ni{\bf Claim #1.}\ }
\def\Fact#1.{\medbreak\ni{\bf Fact~#1.}\ }
\def\Example#1.{\medbreak\ni{\bf Example~#1.}\ }
\def\Case#1.{\medbreak\ni{\bf Case~#1.}\ }
\def\SubCase#1.{\medbreak\ni{\bf Subcase~#1.}\ }

\def\Proof{\ni{\sl Proof.}\ }
\def\qed{\hfill\fbox{\hbox{}}\medskip}
\def\qedclaim{\hfill$\triangle$\smallskip}

\def\ITEMMACRO #1 ??? #2 ???{\par\vskip4pt\noindent%
\hangindent=#2em\setbox0\hbox{#1\kern4pt}%
\ifdim\wd0<\hangindent\setbox0\hbox to\hangindent{\hss#1\kern7pt}\fi%
\box0\ignorespaces}
 
\def\Item(#1){\ITEMMACRO {\rm (#1)} ??? 1.8 ???}

\let\Bitem=\bItem
\def\BrackItem[#1]{\ITEMMACRO [#1] ??? 1.8 ???}

\gdef\SetFigFont#1#2#3#4#5{}
\newcount\magproz 
\newcount\vorne
\newcount\hinten
\newcount\zwischen
\magproz=100

\def\path{Figures}

\def\calc_figscale#1{
   \magproz=#1
    \vorne=\magproz
    \divide\vorne by 100
    \hinten=\magproz
    \zwischen=\vorne
    \multiply\zwischen by 100
    \advance\hinten by -\zwischen
    \global\def\figscale{\the\vorne.\the\hinten}
}

\def%
   \calc_figscale{#}
    \bigskip
    \centerline{\input{\path/1.pstex_t}}
    #2{%
   \calc_figscale{#1}
    \bigskip
    \centerline{\input{\path/#2.pstex_t}}
    }

\def
   \calc_figscale{#}
   \input{\path/1.pstex_t}
    #2{
   \calc_figscale{#1}
   \input{\path/#2.pstex_t}
    }

\def
   \calc_figscale{#}
    \begin{figure}[htb]
    \centerline{\input{\path/1.pstex_t}}
    \caption{#\label{fig:1}}
    \end{figure}
    2#3{
   \calc_figscale{#1}
    \begin{figure}[htb]
    \centerline{\input{\path/#2.pstex_t}}
    \caption{#3\label{fig:#2}}
    \end{figure}
    }

%



\author{Stefan~Felsner \\
  {\small  Institut f\"{u}r Mathematik} \\
  {\small Technische Universit\"{a}t Berlin}\\
   \texttt{\small felsner@math.tu-berlin.de}
\and Johan~Nilsson\thanks{Supported by the EU Research Training Network 
 COMSTRU.} \thanks{This research was done while the author visited TU Berlin.} \\
    {\small MADALGO, BRICS} \\  
    {\small Department of Computer Science} \\
    {\small University of Aarhus} \\
    \texttt{\small johann@daimi.au.dk}
   }
\date{}
\title{On the Order Dimension of Outerplanar Maps}


\begin{document}
\enlargethispage{1cm}
\maketitle
\vbox{}
\vskip-15mm
\vbox{}
\begin{abstract} 
  Schnyder characterized planar graphs in terms of order dimension.
  Brightwell and Trotter proved that the dimension of the vertex-edge-face
  poset $\vef{M}$ of a planar map $M$ is at most four. In this paper
  we investigate cases where $\dim(\vef{M}) \leq 3$ and also
  where $\dim(\vf{M}) \leq 3$; here $\vf{M}$ denotes the vertex-face poset of $M$.
  We show:
  \smallskip

\ni{}\kern5pt$\bullet$\kern4pt{}If $M$ 
  contains a $K_4$-subdivision, then $\dim(\vef{M}) = \dim(\vf{M}) = 4$.
  \smallskip

\ni{}\kern5pt$\bullet$\kern4pt{}If 
   $M\kern-.1pt$ or the dual $M^*\kern-.1pt$ contains a 
   $K_{2,3}$-subdivision, then 
   \hbox{$\dim(\vef{M}) = 4$.}\smallskip

  \ni 
  Hence, a map $M$ with $\dim(\vef{M}) \leq 3$ must be outerplanar
  and have an outerplanar dual.
  We concentrate on the simplest class
  of such maps and prove that within this
  class $\dim(\vef{M}) \leq 3$ is equivalent to the existence of a
  certain oriented coloring of edges. This condition is easily checked and can 
  be turned into a linear time algorithm returning a 3-realizer.
  
  Additionally, we prove that if $M$ is 2-connected and
  $M$ and $M^*$ are outerplanar, then $\dim(\vf{M}) \leq 3$.
  There are, however, outerplanar maps with $\dim(\vf{M}) = 4$.
  We construct the first such example.
\end{abstract}

\section{Introduction}
This paper is about planar maps and the order dimension of posets
related to them. A \emph{planar map} $M = (G,D)$
consists of a finite planar multigraph $G$ and a plane drawing $D$ of
$G$. By a planar map $M$ we mean the combinatorial data 
given by the set $V$ of vertices, the set $E$ of edges, the set $F$ of faces
and the incidence relations between these sets.

The dual map $M^*$ of $M$ is defined as usual: there is a vertex $F^*$ 
in $M^*$ for each face $F$ in $M$, and an edge $e^*$ in $M^*$ for each 
edge $e$ of $M$, joining the dual vertices corresponding to the faces 
in $M$ separated by $e$ (if $e$ is a bridge, $e^*$ is a loop). 
Each vertex in $M$ will then correspond to a face of~$M^*$.

Most of the maps we consider in this paper are outerplanar.  We
differentiate between two notions of outerplanar maps.  A planar map
$M=(G,D)$ is \emph{weakly outerplanar} if $G$ is outerplanar, and
\emph{strongly outerplanar} if $G$ is outerplanar and $D$ is an
outerplane drawing of $G$, i.e., a plane drawing of $G$ where all the
vertices are on the boundary of the outer face. When it is clear from
the context, the qualifiers weakly and strongly will be omitted.

The dimension is a widely studied parameter of posets.
Since its introduction by Dushnik and Miller~\cite{DuMi41} in 1941,
dimension has moved into the core of combinatorics. There are 
close connections and analogies with the chromatic number of graphs
and hypergraphs. From the applications point of view, dimension is
attractive because low dimension warrants a small encoding complexity
of the poset. Trotter~\cite{Trotter} provides an extensive
introduction to the area. 

The vertex-edge-face poset $\vef{M}$ of a planar map $M$ is the
poset on the vertices, edges and faces of $M$ ordered by
inclusion. The vertex-face poset $\vf{M}$ of $M$ is the subposet
of $\vef{M}$ induced by the vertices and faces of~$M$.

   \calc_figscale{100}
    \begin{figure}[htb]
    \centerline{\input{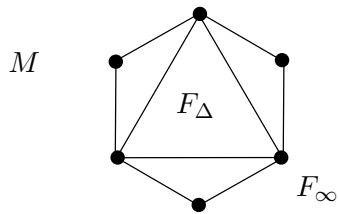}}
    \caption{A planar map. Two faces are labeled, $F_\infty$ is the {\em outer face}.\label{fig:map_ex}}
    \end{figure}
    

   \calc_figscale{55}
    \begin{figure}[htb]
    \centerline{\input{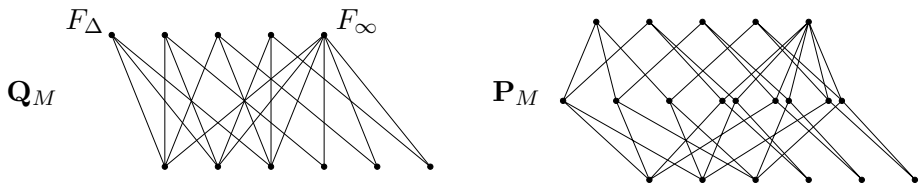}}
    \caption{The vertex-face and the vertex-edge-face posets of the map from \reffig{fig:map_ex}.\label{fig:vf+vef}}
    \end{figure}
    

Note that if $M$ is connected, the vertex-edge-face poset $\vef{M^*}$
of the dual map is just the dual poset $(\vef{M})^*$ (i.e., $x < y$
in $(\vef{M})^*$ if and only if $y < x$ in $\vef{M}$). The same
observation is true for $\vf{M}$.

The \emph{dimension} $\dim(\PP)$ of a poset $\PP$ is the
minimum number $t$ such that $t$ is the intersection of $t$ linear
orders on the same ground set. From this it follows
that if $Q$ is an induced subposet of $P$, then
$\dim(\vf{}) \leq \dim(\vef{})$. In particular this implies
$\dim(\vf{M}) \leq \dim(\vef{M})$ for every map $M$.

The background to our investigations lies in Schnyder's
characterization of planar graphs in terms of
dimension~\cite{Schnyder89} and in the two theorems of Brightwell and
Trotter stated below.  A simpler proof of Theorem~\ref{thm:bt2} can be
found in~\cite{FelZick07}.

\begin{thm}[Brightwell and Trotter \cite{BT97}]
  \label{thm:bt1}
    If $M$ is a planar map, then $\dim(\vef{M}) \leq 4$.
\end{thm}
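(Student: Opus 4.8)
The plan is to prove the bound by exhibiting four linear extensions of $\vef{M}$ whose intersection is $\vef{M}$ itself, in the spirit of Schnyder's method~\cite{Schnyder89}. Recall that a family $L_1,\dots,L_4$ of linear extensions realizes $\vef{M}$ as soon as every critical pair is reversed in at least one $L_i$, so the first step is to list the critical pairs. Since the vertices are the minimal and the faces the maximal elements of $\vef{M}$, the critical pairs are of the forms $(v,v')$, $(F,F')$, $(v,e)$ with $v$ incident to both faces bounding $e$ but not to $e$, $(v,F)$ with $v\notin\partial F$, and $(e,F)$ with both endpoints of $e$ on $\partial F$ but $e\notin\partial F$ (together with the degenerate multigraph variants coming from parallel edges and digon faces, handled the same way). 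The task is to choose $L_1,\dots,L_4$ that between them reverse all of these.

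Before constructing the extensions I would reduce to a convenient class of maps. Disjoint unions, bridges and cut vertices are dealt with by a routine gluing argument: a map obtained by identifying two smaller maps in a vertex or an edge has $\vef{}$-dimension no larger than the maximum over the pieces, since linear extensions of the pieces can be interleaved. Hence it suffices to treat $2$-connected $M$. For such $M$ the combinatorial input to the construction is a Schnyder-type decomposition: an orientation and $3$-colouring of the inner edges in which every inner vertex has exactly one out-edge of each colour (these exist for $3$-connected plane graphs; the $2$-connected non-$3$-connected case is reached by a further split along $2$-cuts or by augmenting to a $3$-connected graph and transporting the structure back, taking care not to destroy large faces). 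Such a wood partitions, for every inner vertex $v$, the disc into three regions $R_1(v),R_2(v),R_3(v)$, and it also assigns a well-defined ``home region'' to every edge and every face. From this one reads off three preorders $\prec_1,\prec_2,\prec_3$ on $V\cup E\cup F$ — in colour $i$, order all elements by containment/depth of their $R_i$-regions, consistently with the incidences — and a fourth ``dual'' order $\prec_4$ obtained from a canonical shelling of the faces (e.g.\ a breadth-first order in the dual starting at $F_\infty$), extended to all of $V\cup E\cup F$ by incidence. Breaking the remaining ties turns each $\prec_i$ into a genuine linear extension $L_i$ of $\vef{M}$.

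The verification then splits along the five critical-pair types. Schnyder's region inequalities give that every critical $(v,v')$ is reversed within $L_1,L_2,L_3$, and the same regions, with the rule that an edge or face is ordered consistently with the vertices it contains, dispatch the $(v,e)$ and $(e,F)$ pairs. The face--face pairs and the non-incident $(v,F)$ pairs are where $L_4$ earns its keep: the dual shelling linearly orders any two faces, and it can be arranged so that a face $F$ precedes every vertex not on $\partial F$, which is precisely the reversal the critical pair $(v,F)$ requires. I expect the main obstacle to be exactly this coupling: one must choose the three Schnyder-based extensions so that they already handle essentially all incidence-driven pairs among vertices, edges and faces, leaving for the single remaining extension only the ``global'' face-side pairs — that is, one must show that Schnyder's three dimensions plus exactly one more suffice, and that this one more can be made simultaneously a linear extension of $\vef{M}$ and a reverser of every surviving critical pair. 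This is also where the gap between the bound $4$ here and the bound $3$ for the vertex--edge incidence poset resides, and where the reduction to $3$-connectivity must be performed with care so that faces of large size are not lost along the way. The resulting argument recovers the theorem of Brightwell and Trotter~\cite{BT97}.
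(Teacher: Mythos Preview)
First, a framing remark: the paper does not prove this theorem at all. It is quoted as background from \cite{BT97}, with no proof given (a simpler proof of the companion \refthm{thm:bt2} is attributed to \cite{FelZick07}, but nothing further is said about \refthm{thm:bt1}). So there is no ``paper's own proof'' to compare against; what I can do is assess whether your sketch stands on its own.

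It does not, and the failure is concrete. You describe $L_4$ as a linear extension in which ``a face $F$ precedes every vertex not on $\partial F$.'' Since $L_4$ is also required to be a linear extension of $\vef{M}$, every vertex on $\partial F$ must precede $F$. Now take any two bounded faces $F,G$ and vertices $u\in\partial F\setminus\partial G$ and $w\in\partial G\setminus\partial F$ (such vertices exist as soon as $M$ has at least two inner faces with distinct vertex sets). Your conditions force $u<F<w<G<u$ in $L_4$, a cycle. Hence no single linear extension can reverse \emph{all} non-incident $(v,F)$ pairs, so the workload you assign to $L_4$ is infeasible. The actual Brightwell--Trotter argument distributes the $(v,F)$ reversals among all four extensions; there is no ``three Schnyder extensions plus one clean-up'' shortcut of the kind you propose.

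There is a second gap in the reduction step. You assert that identifying two maps at a vertex cannot raise $\dim(\vef{\cdot})$ because ``linear extensions of the pieces can be interleaved.'' This is exactly the kind of statement the present paper warns against: the concluding remarks exhibit a map $M$ glued from two maximal path-like components $C_1,C_2$ at a vertex (with common outer face) such that $\dim(\vef{C_i})=3$ for each $i$ but $\dim(\vef{M})=4$. Interleaving realizers of the pieces does \emph{not} in general produce a realizer of the glued map, because the shared outer face acquires new incidences and new critical pairs that neither piece sees. Any reduction to the $2$- or $3$-connected case therefore needs a genuine argument, not an appeal to interleaving.
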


\begin{thm}[Brightwell and Trotter \cite{BT93}]
  \label{thm:bt2}
    If $M$ is a 3-connected planar map, then $\dim(\vf{M}) = 4$.
\end{thm}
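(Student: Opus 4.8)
\medskip\noindent\textsl{Proof idea.}\ The upper bound is immediate: $\vf{M}$ is the subposet of $\vef{M}$ induced by the vertices and faces, so the monotonicity of dimension under induced subposets together with \refthm{thm:bt1} gives $\dim(\vf{M})\le\dim(\vef{M})\le 4$. All the work is in showing $\dim(\vf{M})\ge 4$, and here the plan is to argue by contradiction using the reversing-set description of dimension: $\dim(P)\le t$ holds iff the critical pairs of $P$ can be partitioned into $t$ sets each of which, adjoined to $P$, creates no directed cycle — equivalently, contains no alternating cycle. Since $\vf{M}$ has height two (vertices below faces, incident pairs comparable), its relevant critical pairs are the non-incident pairs $(v,F)$; reversing such a pair means adjoining $F<v$, and a family $(v_1,F_1),\dots,(v_k,F_k)$ forms an alternating cycle precisely when $v_i\notin F_i$ for every $i$ while $v_i\in F_{i+1}$ cyclically, as then $F_1<v_1<F_2<v_2<\dots<F_k<v_k<F_1$ would be forced. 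Thus a $3$-realizer of $\vf{M}$ would colour the non-incident vertex--face pairs with three colours so that no colour class carries such a cyclic incidence pattern, and I want to show that $3$-connectivity plus planarity rule this out.

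I would first settle the base case $M=K_4$: its four vertices and four triangular faces realize the standard example $S_4$ as an induced subposet of $\vf{M}$ — every vertex is incident to exactly the three faces other than the one opposite to it — and $\dim(S_4)=4$. For a general $3$-connected map on at least four vertices I would try to reduce to this picture through the classical fact that it contains a subdivision of $K_4$ with branch vertices $b_1,\dots,b_4$: for each $i$ the three branch paths avoiding $b_i$ bound a cycle $C_i$ of $M$ with $b_i$ strictly to one side of $C_i$ and every $b_j$ ($j\ne i$) on $C_i$. The goal is to pick, for each $i$, a face on the $b_i$-free side of $C_i$ incident to the branch vertices of $C_i$, so as to force a copy of $S_4$ — or of one of the other $3$-irreducible posets — inside $\vf{M}$; failing that, to force a family of non-incident pairs whose alternating cycles must cross every $C_i$ and so admit no $3$-colouring. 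Euler's formula $n-m+f=2$ and $3$-connectivity enter here to guarantee enough faces and incidences.

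The step I expect to be the real obstacle is exactly this reduction. In contrast with the edge--vertex setting of Schnyder's theorem, the four regions cut out by a $K_4$-subdivision of $M$ are unions of faces of $M$ rather than faces themselves, and arbitrary further vertices and faces may lie inside those regions, so one cannot simply read off an $S_4$. One is thus forced to analyze the alternating-cycle structure of $\vf{M}$ globally and to exclude a $3$-colouring by a planarity and counting argument robust under subdivision and under the insertion of extra material into the regions. Carrying this out is the heart of the proof; it is done in Brightwell and Trotter \cite{BT93}, and a streamlined version via orthogonal surfaces appears in Felsner and Zickfeld \cite{FelZick07}.
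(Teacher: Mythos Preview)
The paper does not prove this theorem. It is quoted as a known result of Brightwell and Trotter \cite{BT93}, with the remark that a simpler proof appears in \cite{FelZick07}; the paper then \emph{uses} it as a black box, most notably in the proof of \refthm{thm:k4_lower_bound}, where an arbitrary map containing a $K_4$-subdivision is reduced to a $3$-connected map and \refthm{thm:bt2} is invoked. So there is no in-paper proof to compare your proposal against.

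As for the proposal itself: what you have written is an honest proof \emph{outline}, not a proof, and you say as much in the final paragraph. The upper bound and the $K_4$ base case are fine (the paper gives the same $K_4$ example). But the route you sketch for the general case---locate a $K_4$-subdivision, and for each of the four regions find a face incident to the three branch vertices on its boundary so as to exhibit an $S_4$ or a related $3$-irreducible poset---does not work in general, exactly for the reason you flag: those regions are unions of faces, and there need not be a single face touching all three branch vertices. One does not salvage this by a local patching argument; the known proofs proceed quite differently. Brightwell and Trotter's original argument in \cite{BT93} is a delicate global analysis of a hypothetical $3$-realizer, and the Felsner--Zickfeld proof \cite{FelZick07} goes through Schnyder woods and orthogonal surfaces, bypassing the $S_4$ heuristic entirely. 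Your paragraph correctly names the obstacle and correctly points to the sources; just be aware that the ``fill in the details'' step here is the entire theorem, not a routine exercise.
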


It was observed by Li~\cite{Li} that the characterization of maps with
$\dim(\vef{M}) = 2$ given in the concluding section of \cite{BT97} is
incomplete but the full characterization exists, the paper is in 
preparation.
The case of dimension is 2 is also well-understood algorithmically: There are fast
algorithms to test whether a poset $\PP$ is of dimension 2, see
e.g.~\cite{Moe}. 

For $\dim(\vef{M}) \geq 3$ it is sufficient that $M$ has a vertex of
degree 3.  To test whether an arbitrary poset has dimension equal to~3
is known to be NP-complete~\cite{Yannakakis82}.  A major open problem
in the area is to determine the complexity of the dimension 3 problem
for orders of height 2. Even the special case where the order of
height 2 is the vertex-face poset of a planar map remains
open. Motivated by these algorithmic questions we approach the
problems of characterizing the maps $M$ with $\dim(\vef{M}) \leq 3$
and the maps with $\dim(\vf{M}) \leq 3$. These characterization
problems have earlier been posed by Brightwell and Trotter
\cite{BT97}.

\subsection{Our contributions}
In \refsection{sec:lower_bound} we prove that for $\dim(\vf{M}) \leq 3$
it is necessary
that $M$ is $K_4$-subdivision free. For $\dim(\vef{M}) \leq 3$ an additional
necessary condition is that both $M$ and $M^*$ are $K_{2,3}$-subdivision free. 
This means that if $\dim(\vef{M}) \leq 3$, then both $M$ and $M^*$
are outerplanar. 
%

In \refsection{sec:path_like}, we study the simplest class of maps $M$
such that $M$ and $M^*$ are outerplanar. We call these
maps {\it path-like}. For maximal path-like maps we prove that $\dim(\vef{M})
\leq 3$ is equivalent to the existence of a special oriented coloring
of the interior edges and characterize the path-like maps which admit such
a coloring. The characterization is turned into a linear time algorithm that
generates a 3-realizer, i.e., three linear extensions whose intersection 
is $\vef{M}$, or returns the information that $\dim(\vef{M}) \geq 4$.

Finally, in \refsection{sec:vf}, we prove that if $M$ is 2-connected and
$M$ and $M^*$ are outerplanar, then $\dim(\vf{M}) \leq 3$. We also
present a strongly outerplanar map with a
vertex-face poset of dimension 4. The example, a
maximal outerplanar graph with 21 vertices, is quite large.
We provide some arguments which indicate that our 
example is not far from being as small as possible.

\subsection{Tools from dimension theory}

In this section we recall some facts from the dimension theory 
of finite posets. The reader is referred to Trotter's monograph 
\cite{Trotter} for additional background and references.

If $\PP$ is a finite poset, a family $\cR = \{L_1, L_2,
\ldots , L_t\}$ of linear extensions of
$\PP$ is called a \emph{realizer} of $\PP$ if $P = \cap
\cR$, i.e. $x<y$ in $\PP$ if and only if $x<y$ in $L$
for all $L \in \cR$. The dimension of $\PP$ is the
minimum cardinality of a realizer of $\PP$.

A \emph{critical pair} is a pair of incomparable elements $(a,b)$ 
such that $x<b$ if $x<a$ and $y>a$ if $y>b$ for all $x,y \in 
\PP$. A family of linear extensions $\cR$ 
of $P$ is a realizer of $\PP$ if and only if each 
critical pair $(a,b)$ is \emph{reversed} in some linear extension $L\in \cR$, 
i.e., $b < a$ in $L$. An incomparable min-max pair, i.e., a pair of
incomparable elements $(a,b)$ where $a$ is a minimal element  and $b$
is a maximal element of $\PP$, is always critical.
 
An alternating cycle  is a sequence of critical pairs
$(a_0,b_0), \ldots, (a_k, b_k)$ such that $a_i < b_{(i+1 \mod (k+1))}$
for all $i=0,\ldots,k$. A fundamental result is that $\dim(\PP)
\leq t$ if and only if there exists a $t$-coloring of the critical pairs of
$\PP$ such that no alternating cycle is monochromatic.

In the following example we illustrate how these facts can be combined to
determine the dimension of a specific incidence order.
\smallskip

\noindent
{\bf Example:} Let $M$ be the planar map of the complete graph $K_4$.
Every vertex has a single non-incident face, hence, there are
these four incomparable min-max pairs in $\vf{M}$. These are all the critical pairs.
Any two of these
critical pairs form an alternating cycle. Therefore, the hypergraph of alternating
cycles is again a $K_4$ and has chromatic number 4. This shows that
$\dim(\vf{M})=4$.

\section{Vertex-edge-face posets of dimension at most 3}
\label{sec:lower_bound}

From \refthm{thm:bt2}, we know that $\dim(\vf{M})=4$ for every
3-connected map~$M$. We show that this excludes
$K_4$-subdivisions from being contained in $M$ if $\dim(\vf{M}) \leq
3$. 

\begin{thm}\label{thm:k4_lower_bound}
  Let $M$ be a planar map that contains a subdivision of $K_4$. Then
  $\dim(\vf{M}) > 3$.
\end{thm}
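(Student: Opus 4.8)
The statement is a monotonicity-type reduction: a $K_4$-subdivision forces $\dim(\vf{M}) = 4$ by "pulling back" the dimension-$4$ certificate from the $K_4$-map of the Example. The natural route is to locate inside $\vf{M}$ four critical pairs that mimic the four min-max pairs of $\vf{K_4}$ and to show that any two of them lie on a short alternating cycle, so that the hypergraph of alternating cycles restricted to these four pairs is again a $K_4$ and hence needs four colors. Since $\dim(\PP) \le 3$ would give a $3$-coloring of all critical pairs with no monochromatic alternating cycle, the presence of such a $K_4$ of pairwise-interlocking critical pairs is a contradiction.

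First I would fix a subdivision $H$ of $K_4$ inside $G$, with branch vertices $u_1,u_2,u_3,u_4$ and the six subdivided edges (internally disjoint paths) $P_{ij}$ joining $u_i$ to $u_j$. The drawing $D$ restricted to $H$ is a plane drawing of a $K_4$-subdivision, which partitions the plane into four faces of $H$: one "triangle" $T_\ell$ for each branch vertex $u_\ell$ (bounded by the three paths not using $u_\ell$), exactly as in $K_4$. Each face $T_\ell$ of $H$ is a union of faces of $M$; pick one face $f_\ell$ of $M$ lying inside $T_\ell$. The candidate critical pairs are $(u_\ell, f_\ell)$ for $\ell = 1,2,3,4$. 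The two things to verify are: (i) $u_\ell$ and $f_\ell$ are incomparable in $\vf{M}$ — this needs $u_\ell$ to not lie on the boundary of $f_\ell$; and (ii) each $(u_\ell, f_\ell)$ is a critical pair, which since $u_\ell$ is a minimal element (a vertex) and $f_\ell$ is a maximal element (a face), is automatic once incomparability holds, because in a height-$2$ poset every incomparable min-max pair is critical.

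For the alternating-cycle structure I would show that for any $\ell \neq m$ the two-term sequence $(u_\ell, f_\ell), (u_m, f_m)$ is an alternating cycle, i.e. $u_\ell < f_m$ and $u_m < f_\ell$ in $\vf{M}$, equivalently $u_\ell$ lies on the boundary of $f_m$ and $u_m$ on the boundary of $f_\ell$. This is the real content of the argument and the step I expect to be the main obstacle, because $f_\ell$ was chosen as an \emph{arbitrary} $M$-face inside the $H$-face $T_\ell$, and such a face need not touch $u_\ell$ at all — the interior of $T_\ell$ can be carved up by many faces of $M$, only some of which are incident to the branch vertex $u_m$. The fix is to choose $f_\ell$ more carefully: inside $T_\ell$, walk along the boundary path $P_{mk}$ (a path of $H$ not through $u_\ell$ but through, say, $u_m$) on its $T_\ell$-side; the face of $M$ immediately on that side at the vertex $u_m$ is incident to $u_m$ and lies in $T_\ell$. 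One then has to arrange a single face $f_\ell \subseteq T_\ell$ that is simultaneously incident to all three branch vertices on $\partial T_\ell$; this may fail, in which case the right move is to not insist on one face per triangle but to argue directly on critical pairs: replace "$u_m < f_\ell$" by exhibiting, for each ordered pair, a critical pair $(u_\ell, g)$ with $g \subseteq T_\ell$ incident to $u_m$, and check the four-cycle / odd-cycle coloring obstruction on the resulting (possibly larger but still $K_4$-patterned) family. Concretely, I would prove the clean lemma: for each branch vertex $u_m$ and each of the three triangles $T_\ell$ with $u_m \in \partial T_\ell$, there is a face $f_{\ell,m}$ of $M$ inside $T_\ell$ incident to $u_m$; then the pairs $(u_\ell, f_{\ell,m})$ over the relevant $(\ell,m)$ contain an alternating cycle pattern isomorphic to the alternating-cycle $K_4$ of the Example.

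Finally I would assemble the contradiction: suppose $\dim(\vf{M}) \le 3$ and fix a $3$-coloring of all critical pairs of $\vf{M}$ with no monochromatic alternating cycle. Restricting to the chosen critical pairs coming from the $K_4$-subdivision, any two of the four "triangle pairs" form an alternating cycle, so no two may receive the same color — but three colors cannot properly color the four of them, contradiction. Hence $\dim(\vf{M}) \ge 4$, and with $\dim(\vf{M}) \le \dim(\vef{M}) \le 4$ from Theorem~\ref{thm:bt1} we get $\dim(\vf{M}) = 4 > 3$. (An alternative to the final counting step: invoke Theorem~\ref{thm:bt2} directly — a $K_4$-subdivision together with the faces of $M$ inside its four regions yields a $3$-connected "minor-like" substructure on which $\vf{}$ embeds as an induced subposet — but the alternating-cycle bookkeeping above is more self-contained and avoids having to make the $3$-connectivity reduction precise.)
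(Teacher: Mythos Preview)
Your direct approach has a genuine gap at the fix step. You correctly observe that a single face $f_\ell\subseteq T_\ell$ incident to all three branch vertices on $\partial T_\ell$ need not exist (subdivide the edge $u_1u_2$ of $K_4$ by a vertex $w$ and add the chord $wu_4$ inside $T_3$: no face of $M$ in $T_3$ meets both $u_1$ and $u_2$). But the replacement by twelve pairs $(u_\ell,f_{\ell,m})$ does not recover the obstruction. The only incidences you have engineered are $u_m\in f_{\ell,m}$ and $u_\ell\notin f_{\ell,m}$; unwinding the definition, an alternating cycle among these pairs that is \emph{forced} by those incidences alone corresponds to a directed closed walk $\ell_0\to\ell_1\to\cdots\to\ell_k\to\ell_0$ in the complete digraph on $\{1,2,3,4\}$, the arc $\ell_{i-1}\to\ell_i$ carrying the pair $(u_{\ell_i},f_{\ell_i,\ell_{i-1}})$. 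This constraint hypergraph is $2$-colorable: linearly order $1<2<3<4$ and color each arc by whether it is increasing or decreasing; every directed cycle contains arcs of both kinds, so no monochromatic alternating cycle appears. Thus the twelve pairs never force more than two colors, and the ``$K_4$ pattern of alternating cycles'' you assert is simply not present in general.

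The paper's proof is precisely the alternative you set aside. It does not hunt for explicit critical pairs; instead it manufactures a $3$-connected planar map whose vertex-face poset is an induced subposet of $\vf{M}$ and then invokes \refthm{thm:bt2}. First one passes to the $2$-connected block containing the $K_4$-subdivision (its vertex-face poset is an induced subposet of $\vf{M}$). Then, while a $2$-cut $\{x,y\}$ remains, one replaces one of the two sides by a single edge $xy$, obtaining two smaller maps whose vertex-face posets are again subposets of the current one; the $K_4$-subdivision survives in at least one of them. Iterating yields a $3$-connected map $N$ with $\vf{N}$ an induced subposet of $\vf{M}$ and $\dim(\vf{N})=4$ by Brightwell--Trotter, hence $\dim(\vf{M})\geq 4$. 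The $3$-connectivity reduction you wanted to avoid is only a few lines, and it is exactly what makes the argument work.
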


\begin{proof}
  We will prove that $\vf{M}$ has the vertex-face poset of some 3-connected planar 
  map as a subposet, and then apply the Brightwell-Trotter Theorem.
  This is essentially done in two steps: first 1-vertex cuts and then
  2-vertex cuts are removed. 

  A $K_4$-subdivision in $M$ will be contained in a 2-connected component
  of $M$. The vertex-face poset of a 2-connected component of
  $M$ is an induced subposet of $\vf{M}$. Hence, we can assume that
  $M$ is 2-connected.

   \calc_figscale{60}
    \begin{figure}[htb]
    \centerline{\input{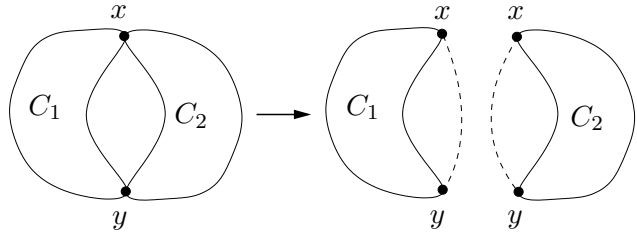}}
    \caption{Removing a separating pair.\label{fig:sep_pair}}
    \end{figure}

  Now, consider a 2-vertex cut $\{x,y\}$. There must be two components $C_1$ and $C_2$ such 
  that removing $x$ and $y$ separates $C_1$ from $C_2$. We create two
  new maps by replacing one of the two components by an edge
  $\{x,y\}$, see \reffig{fig:sep_pair}.
  The vertex-face posets of
  these new maps are subposets of $\vf{M}$. Furthermore, if $M$
  contains a $K_4$-subdivision, one of the new maps must contain a $K_4$-subdivision.
\end{proof}

For vertex-edge-face posets, we provide another criterion which forces 
dimension 4. If $M$ contains a $K_{2,3}$-subdivision, then $\dim(\vef{M}) =4$. 
Before analyzing the general situation we deal with the 
simple case where $M$ is, actually, a subdivision of $K_{2,3}$.

\begin{prop}\label{prop:k23_lower_bound_basic}
  Let $M$ be a planar drawing of a subdivision of $K_{2,3}$. Then $\dim(\vef{M}) > 3$.
\end{prop}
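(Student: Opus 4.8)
The plan is to exhibit an explicit set of critical pairs in $\vf{M}$ (which is an induced subposet of $\vef{M}$, so this also bounds $\dim(\vef{M})$ from below, but here we want the stronger statement with edges, so I'll work directly in $\vef{M}$) whose alternating-cycle hypergraph has chromatic number $4$, exactly as in the $K_4$ example in the excerpt. Write $K_{2,3}$ with ``left'' vertices $u,v$ (the degree-$3$ side, possibly subdivided only on the paths, so assume $u,v$ themselves are branch vertices) and three internally disjoint $u$--$v$ paths $\pi_1,\pi_2,\pi_3$. A plane drawing nests these three paths, creating exactly three bounded faces $F_{12},F_{23},F_{13}$ (the face bounded by $\pi_i\cup\pi_j$) plus the outer face $F_\infty$ bounded by $\pi_1\cup\pi_3$ say; so really there are three ``inner'' faces in the cyclic arrangement $F_{12},F_{23}$ and the outer one. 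The key incidences: each face $F_{ij}$ is incident to the vertices and edges of $\pi_i$ and $\pi_j$ only, and $u,v$ lie on the boundary of all faces.

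First I would list the relevant critical pairs. On each path $\pi_i$ pick an internal vertex (or, if $\pi_i$ is unsubdivided, its single edge) $x_i$; then $x_i$ is incident only to the two faces $F_{ij},F_{ik}$, and is incomparable to the third face $F_{jk}$. This gives three min–max critical pairs $(x_1, F_{23})$, $(x_2,F_{13})$, $(x_3,F_{12})$. Additionally I would use a fourth critical pair built from $u$ or $v$: although $u,v$ touch every face, a suitable edge–face or vertex–face pair near the ``center'' of the configuration is incomparable to one designated face. The natural fourth pair is the edge–edge or vertex–vertex type — e.g. the critical pair $(e, e')$ where $e\in\pi_1$, $e'\in\pi_2$ are incomparable, whose down-set/up-set structure links it to the faces $F_{13}$ and $F_{23}$. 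The goal is to arrange four critical pairs $(a_0,b_0),\dots,(a_3,b_3)$ so that every pair of them forms an alternating cycle, i.e. for all $i\ne j$ we have $a_i < b_j$; then the alternating-cycle hypergraph contains a complete graph $K_4$ on these four pairs, forcing $\dim \ge 4$, hence $=4$ by \refthm{thm:bt1}.

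The concrete verification I would carry out: for the three pairs $(x_i, F_{jk})$, note $x_i$ lies on $\pi_i$ hence below $F_{ij}$ and $F_{ik}$, so indeed $x_i < F_{ij}$ and $x_i < F_{ik}$, and $F_{ij},F_{ik}$ are exactly the two ``other'' faces — so any two of these three pairs already alternate (this is the $K_4$-in-$\vf{}$ pattern on three faces, which by itself only gives the hypergraph $K_3$, chromatic number $3$). The real work is picking the fourth critical pair $c=(a_3,b_3)$ with $a_3 < F_{12}, F_{23}, F_{13}$ (so $c$ alternates with all three of the previous pairs in one direction) and simultaneously $x_1,x_2,x_3 < b_3$ (so they alternate in the other direction). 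A vertex candidate for $a_3$ is $u$ itself, since $u$ is below all three inner faces; for $b_3$ I need a maximal element above all of $x_1,x_2,x_3$ — but no single face is above all three $x_i$'s. This is the main obstacle, and the resolution is that $b_3$ need not be a single element of the same ``type'': one instead takes $b_3$ to be the outer face $F_\infty$ together with re-choosing the $x_i$ as the edges incident to $v$, or more cleanly, one replaces one of the three face-pairs by a pair of the form $(x_i, e_j)$ where $e_j$ is the $v$-incident edge of $\pi_j$, exploiting that $x_i<$ (the two faces flanking $e_j$)$\ni e_j$ fails but $x_i < F \ni e_j$ holds for the appropriate common face.

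Rather than guess, the cleanest route — and the one I would actually write — is to observe that $\vef{M}$ for a subdivided $K_{2,3}$ contains as an induced subposet the $\vef{}$-poset of the \emph{un}subdivided $K_{2,3}$ (subdividing only lengthens chains without creating new incomparabilities among the retained elements), so it suffices to handle $M_0 = K_{2,3}$ drawn in the plane. For $M_0$ the poset is small and finite: $5$ vertices, $6$ edges, $3$ faces. I would simply enumerate its critical pairs, find four of them that pairwise form alternating cycles (the enumeration is short enough to present as a small table or list), and conclude $\dim(\vef{M_0}) = 4$ via the alternating-cycle criterion together with \refthm{thm:bt1}; then $\dim(\vef{M}) \ge \dim(\vef{M_0}) = 4$, and again $\le 4$ by \refthm{thm:bt1}, giving equality. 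The only genuinely non-routine point is identifying the fourth critical pair that upgrades the $K_3$ of alternating cycles among the three ``face'' pairs to a $K_4$; I expect it to be a vertex–face pair of the form $(u, F)$ or an edge–face pair where $F$ is an inner face not incident to the chosen path-interior element — and checking this is a finite incidence computation once the drawing's faces are fixed.
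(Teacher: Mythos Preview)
Your plan has two genuine gaps, and the paper's argument is structurally different.

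\textbf{First gap: no $K_4$ of critical pairs exists.} For the unsubdivided $K_{2,3}$ (with branch vertices $x,y$, middle vertices $w_1,w_2,w_3$, edges $e_i=\{x,w_i\}$, $f_i=\{y,w_i\}$, and faces $F_{12},F_{23},F_{31}$) one can check that the complete list of critical pairs of $\vef{K_{2,3}}$ is
\[
(w_i,F_{jk}),\quad (x,f_i),\quad (y,e_i)\qquad (i=1,2,3),
\]
nine pairs in all. The $2$-cycle graph on these nine pairs consists of the triangle on the three $(w_i,F_{jk})$, a $K_{3,3}$ between $\{(x,f_i)\}$ and $\{(y,e_j)\}$, and a perfect matching $(x,f_i)\!-\!(w_i,F_{jk})$, $(y,e_i)\!-\!(w_i,F_{jk})$. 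This graph contains no $K_4$: any fourth vertex adjacent to all three $(w_i,F_{jk})$ would have to equal $(x,f_i)$ for all $i$ simultaneously. So the specific strategy you announce --- ``find four of them that pairwise form alternating cycles'' --- cannot succeed. (It is true that this $2$-cycle graph has chromatic number $4$, via the $K_{3,3}$ plus the matching constraints, so a direct chromatic argument \emph{would} work for the unsubdivided map; but that is not the argument you sketched.)

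\textbf{Second gap: the reduction to the unsubdivided case fails.} Your claim that $\vef{K_{2,3}}$ sits inside $\vef{M}$ as an induced subposet is false for $\vef{}$ (it is true for $\vf{}$). Subdividing an edge destroys it; in a genuinely subdivided $K_{2,3}$ there need be no edge of $M$ containing both $x$ and your chosen internal vertex $w_i$ of $P_i$, so the elements playing the role of $e_i$ simply do not exist. Thus even a correct argument for $K_{2,3}$ itself would not transfer.

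\textbf{What the paper does instead.} The paper does not look for a $K_4$ among critical pairs. It assumes a $3$-realizer $\{L_1,L_2,L_3\}$, normalizes so that $x>y$ only in $L_3$, and then uses the \emph{edges} $\{v_i,y\}$ (where $v_i$ is the neighbour of $y$ on $P_i$) to force $v_i<x$ in $L_3$, hence $v_i$ below every face in $L_3$. Consequently none of the three critical pairs of the $3$-crown on $\{v_1,v_2,v_3,F_1,F_2,F_3\}$ is reversed in $L_3$, so $L_1,L_2$ alone would have to realize a $3$-dimensional crown --- contradiction. The crucial move you are missing is this positional argument using the edge $\{v_i,y\}$ and the comparison of $x$ with $y$; it is what makes the proof go through uniformly for every subdivision.
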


\begin{proof}
 Let $x$ and $y$ be the two vertices of degree 3 in $M$, and
  let $P_1$, $P_2$ and $P_3$ be the three $x$-$y$
  paths. The map has three faces $F_1$, $F_2$ and $F_3$.
  The vertex closest to $y$ in the path $P_i$ is denoted $v_i$,
  see \reffig{fig:k23-new}. 

  Suppose $\{L_1, L_2, L_3\}$ is a realizer of $\vef{M}$. By symmetry, 
  we may assume that $y > x$ in $L_1$ and $L_2$ and $x > y$ in $L_3$. 
  The edge $\{v_1,y\}$ can go below $x$ only in $L_3$.
  In $L_3$ we thus have $v_1$ below $\{v_1,y\}$ below $x$ 
  below $F_1$, $F_2$ and $F_3$. In the same way, we obtain
  that $v_2$ and $v_3$ are below $F_1$, $F_2$ and $F_3$ in
  $L_3$. Hence, none of the three critical pairs of the the subposet $\mathbf{S}$
  induced by $v_1$, $v_2$, $v_3$, $F_1$,
  $F_2$ and $F_3$ in $\vef{M}$ can be reversed in $L_3$. However, $\mathbf{S}$ is a
  crown with $\dim(\mathbf{S}) = 3$. This shows that $\{L_1, L_2, L_3\}$ is not
  a realizer of $\vef{M}$. Hence, $\dim(\vef{M}) > 3$.
\end{proof}

   \calc_figscale{80}
    \begin{figure}[htb]
    \centerline{\input{\path/k23-new.pstex_t}}
    \caption{\label{fig:k23-new}}
    \end{figure}
    VC
{The three paths $P_1$, $P_2$ and $P_3$ partition the map into 3 regions.}

For the general case, where $M$ only contains a
$K_{2,3}$-subdivision, we have to use a more sophisticated technique.
We illustrate this technique  with an alternative proof of the simple case.

\noindent{\em Second proof of \refprop{prop:k23_lower_bound_basic}.}
  The vertices and edges of
  path $P_i$ all belong to $F_i \cap F_{i+1}$ (cyclically). 
  Hence, each path $P_i$ induces a fence of the form $x < e_0 > u_1 < e_1 > \ldots
  > u_s < e_s > y$ between $x$ and $y$ in
  $\vef{M}$ such that all maximal elements are below $F_{i}$ and
  $F_{i+1}$. These three fences are mutually disjoint except for $x$ and $y$.

  Suppose $\{L_1, L_2, L_3\}$ is a realizer of $\vef{M}$. By symmetry,
  we may assume that $y > x$ in $L_1$ and $L_2$ and $x > y$ in
  $L_3$. Now, consider the fence induced by $P_i$, $i\in 1,2,3$; see
  \reffig{fig:new-fence}. 

  The edge $\{v_i,y\}$ must be below $x$ in $L_3$, hence $v_i$ is below $x$ in
  $L_3$. Let $w_i$ be the last vertex encountered when traversing the
  path $P_i$ from $y$ to $x$ which is below $x$ in $L_3$,
  and let $e_i$ be the edge leaving $w_i$ in direction of $x$. The choice of
  $w_i$ implies that $e_i$ is above $x$ and $y$ in $L_3$. Since $e_i$ has to go
  below $y$ somewhere there is an index $j_i \in \{1,2\}$ such that $e_i$ and 
  thus $w_i$ go below $y$ in $L_{j_i}$. 

  Two of the three indices $j_1,j_2,j_3$ must be equal, so we 
  can \twlog assume that $w_1$ and $w_2$ are below $y$ in $L_2$.
  Recalling that $w_1$ and $w_2$ are below $x$ in $L_3$ we
  conclude that $w_1$ and $w_2$ are below all faces $F_j$
  in $L_2$ and~in~$L_3$.

  Now, none of the critical pairs of the subposet {\bf 2+2} of
  $\vef{M}$ induced by $w_1$, $w_2$, $F_1$ and $F_3$ are reversed in
  $L_2$ or $L_3$. But $\dim(${\bf 2+2}$)=2$, so the critical pairs of $\mathbf{Q}$ cannot
  be reversed in $L_1$ alone. Hence $\{L_1,L_2,L_3\}$ cannot be a realizer
  of $\vef{M}$.
\qed

   \calc_figscale{65}
    \begin{figure}[htb]
    \centerline{\input{\path/new-fence.pstex_t}}
    \caption{\label{fig:new-fence}}
    \end{figure}
    VC
{The fence of path $P_1$ and $L_3$.}

We now move on to the slightly more complicated case where $M$ only
contains a subdivision of $K_{2,3}$.

\begin{thm}\label{thm:k23_lower_bound}
  Let $M$ be a planar map such that $M$ contains a subdivision of $K_{2,3}$. 
  Then $\dim(\vef{M}) > 3$.
\end{thm}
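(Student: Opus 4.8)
The plan is to reduce the general case to the situation handled in Proposition~\ref{prop:k23_lower_bound_basic} by first stripping away everything in $M$ that lies outside the $K_{2,3}$-subdivision, using the same kind of contraction/replacement moves as in the proof of Theorem~\ref{thm:k4_lower_bound}, and then to handle the remaining ``padding'' inside the three paths directly. Concretely: the $K_{2,3}$-subdivision has two branch vertices $x,y$ of degree $3$ and three internally disjoint $x$--$y$ paths $P_1,P_2,P_3$. The closed walk $P_i\cup P_{i+1}$ bounds a region $R_i$ of the plane, and one of these regions is the exterior; the three regions partition $M$. Any vertex, edge, or face of $M$ lying strictly inside one region $R_i$ is irrelevant to the critical pairs we want to use, so the first step is to argue we may delete them — more carefully, to show the vertex-edge-face poset of a suitable ``reduced'' map is an induced subposet of $\vef{M}$. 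As in Theorem~\ref{thm:k4_lower_bound} this is done by repeatedly cutting along $1$- and $2$-vertex cuts that separate material inside a region from the $K_{2,3}$-skeleton, replacing the cut-off part by a single edge; each such replacement yields a map whose vertex-edge-face poset embeds as an induced subposet, and none of the moves destroys the $K_{2,3}$-subdivision. After this reduction we are left with a map $M'$ in which $\vef{M'}$ is an induced subposet of $\vef{M}$ and the only ``extra'' structure beyond a $K_{2,3}$-subdivision is that the three faces $F_1,F_2,F_3$ of $M'$ may still have chords or attachments along their boundary paths — but in fact, once the exterior and interior material is cut away, $M'$ is exactly a plane drawing of a subdivision of $K_{2,3}$ plus possibly a few chords inside faces, and one more round of the cutting argument removes those too.

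Alternatively, and this is probably the cleaner route since it is the one the ``second proof'' above was set up for, I would mimic the second proof of Proposition~\ref{prop:k23_lower_bound_basic} verbatim at the level of fences. The key structural observation is that each path $P_i$ of the $K_{2,3}$-subdivision, read as a sequence of vertices and subdivision edges in $\vef{M}$, is a fence $x < e_0 > u_1 < e_1 > \cdots > u_s < e_s > y$ all of whose maximal elements lie below the two faces $F_i$ and $F_{i+1}$ of $M$ that the path separates — here $F_i$ denotes the face of $M$ whose boundary contains $P_i\cup P_{i+1}$; these are genuine faces of $M$ even though $M$ has more faces than $K_{2,3}$ does, because $P_i$ and $P_{i+1}$ together bound a region that $P_j$ never enters. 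The three fences are mutually disjoint except at $x$ and $y$. Given a putative realizer $\{L_1,L_2,L_3\}$, by symmetry $y>x$ in $L_1,L_2$ and $x>y$ in $L_3$; the edge at $y$ on $P_i$ forces the last $P_i$-vertex to be below $x$ in $L_3$, so walking from $y$ toward $x$ along $P_i$ there is a last vertex $w_i$ that is below $x$ in $L_3$, and the edge $e_i$ leaving $w_i$ toward $x$ is above both $x$ and $y$ in $L_3$, hence goes below $y$ — and thus $w_i$ goes below $y$ — in some $L_{j_i}$ with $j_i\in\{1,2\}$. Pigeonhole on $j_1,j_2,j_3$ gives two of the $w_i$, say $w_1,w_2$, below $y$ in $L_2$ and below $x$ in $L_3$, hence below all three faces $F_1,F_2,F_3$ in both $L_2$ and $L_3$; then the four-element subposet on $w_1,w_2,F_1,F_3$ is a $\mathbf{2+2}$, whose two critical min-max pairs must both be reversed, but neither can be reversed in $L_2$ or $L_3$, and a single linear extension $L_1$ cannot reverse both. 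Contradiction, so $\dim(\vef{M}) > 3$.

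The main obstacle — and the only place real care is needed — is the claim that $F_i$ and $F_{i+1}$ are honest faces of $M$ whose boundaries each contain all of $P_i$, i.e. that the maximal elements of the $P_i$-fence are below two common faces of the larger map $M$, not just of $K_{2,3}$. Subdividing edges of $K_{2,3}$ is harmless, but in $M$ there may be chords, pendant trees, or whole $2$-connected blocks attached along $P_i$, which could split the region on one side of $P_i$ into several faces, so that no single face of $M$ has all of $P_i$ on its boundary. The fix is to do the reduction of the first paragraph \emph{before} invoking the fence argument: cut along every $1$- or $2$-vertex cut whose removal isolates attached material from the $K_{2,3}$-skeleton, replacing it with a single edge, so that in the reduced map $M'$ (whose $\vef{}$ is an induced subposet of $\vef{M}$) each $P_i$ really is a common boundary path of exactly two faces. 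I should also check the degenerate cases — when one of $x,y$ has degree exactly $3$ in $M'$ versus higher, when $P_i$ has length $1$ (a single subdivision edge, so the fence is just $x < e > y$), and when $x$ and $y$ coincide with cut vertices used in the reduction — but each of these is routine and the argument above goes through unchanged.
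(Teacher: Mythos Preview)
There is a genuine gap in the reduction step that both of your approaches rely on. The $2$-vertex-cut replacement from the proof of \refthm{thm:k4_lower_bound} yields an induced subposet of $\vf{M}$, but it does \emph{not} yield an induced subposet of $\vef{M}$: when you replace one side of a cut $\{a,b\}$ by a single edge, that edge is a new element with no counterpart in $\vef{M}$ unless $a$ and $b$ happened to be adjacent already. So you cannot conclude $\dim(\vef{M'})\leq\dim(\vef{M})$ from the reduction, and hence you cannot invoke \refprop{prop:k23_lower_bound_basic} on $M'$. The same issue blocks the ``one more round'' that is supposed to remove chords inside the regions $R_i$. Without the reduction, your second approach fails exactly where you flagged it: in $M$ there is in general no single face of $M$ whose boundary contains all of $P_i$, so the edges of $P_i$ do not form a fence whose maxima sit below two common faces.

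The paper does not attempt to reduce $M$ at all. Instead it first disposes of the case where $M$ contains a $K_4$-subdivision via \refthm{thm:k4_lower_bound}, and then uses $K_4$-freeness as a structural tool inside $M$: taking a \emph{maximal} family $P_0,\ldots,P_k$ of internally disjoint $x$--$y$ paths, it shows (Claim~A) that each region $R_i$ contains a face $F_i$ of $M$ with $x,y$ on its boundary, because a chordal path separating $x$ from $y$ in $R_i$ would complete a $K_4$-subdivision. The fence $\mathbf{T}_i$ is then built not from all of $P_i$ but only from the vertices $V_i$ of $P_i$ lying in $F_i\cap F_{i+1}$; consecutive vertices of $V_i$ are linked either by an edge of $P_i$ or by a face of $M$ (Claim~C, again using $K_4$-freeness via a no-crossing-shortcuts argument). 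Thus the maxima of the fences are allowed to be faces of $M$, not just edges --- this is the idea your proposal is missing. Once the $\mathbf{T}_i$ and $F_i$ are in hand, the pigeonhole/$\mathbf{2+2}$ argument you wrote out in your second paragraph is exactly what finishes the proof.
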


\begin{proof}
  If $M$ contains a subdivision of $K_4$, the conclusion of the lemma
  follows from \refthm{thm:k4_lower_bound}.  We thus can assume
  that $M$ contains no subdivision of $K_4$. 

  Let $x$ and $y$ be the degree 3 vertices in the subdivision of
  $K_{2,3}$.  Our goal is to find at least three mutually disjoint
  fences $\mathbf{T}_i$ between $x$ and $y$, and a set of faces $F_i$
  such that $x,y \in F_i$ and each minimal element in $\mathbf{T}_i$
  is below $F_i$ and $F_{i+1}$.

  Given fences $\mathbf{T}_i$ and faces $F_i$ as described we can
  continue as in the previous proof: Assume a realizer
  $\{L_1,L_2,L_3\}$ such that $y > x$ in $L_1$ and $L_2$ and $x > y$
  in $L_3$. In each fence $\mathbf{T}_i$ we find a minimal element
  $w_i$ which is below $x$ in $L_3$ and below $y$ in some $L_{j_i}$,
  $j_i\in \{1,2\}$. Since $i\geq 3$ there are indices $a$ and $b$ with
  $j_a = j_b$, and we can \twlog let $j_a = j_b = 1$. Let $a'\in \{a,a+1\}$ and
  $b'\in \{b,b+1\}$ be such that $w_b\not\in F_{a'}$ and $w_a\not\in
  F_{b'}$. Hence, $w_a,F_{a'},w_b,F_{b'}$ induce a {\bf 2+2}. The
  critical pairs of this {\bf 2+2} are not reversed in 
  $L_3$ nor in $L_1$, and they can't both be reversed in $L_2$. This is
  in contradiction to the assumption that $\{L_1,L_2,L_3\}$ is a
  realizer. Hence, $\dim(\vef{M}) >3$. 

  It remains to show how to determine appropriate fences
  $\mathbf{T}_i$.  Consider a maximal set $P_0,P_1,\ldots,P_k$ of
  pairwise internally disjoint paths from $x$ to~$y$. Clearly, $k\geq
  2$. Choose the numbering corresponding to the cyclic order of their
  first edges at $x$.  For $i=1,..,k+1$ let $R_i$ be the bounded area
  between $P_{i-1}$ and $P_i$, with $P_{k+1} = P_0$. 

  \Claim A. 
   If $M$ contains no subdivision of $K_4$, then every region
    $R_i$ contains a face $F_i$ with $x$ and $y$ and interior vertices
    of $P_{i-1}$ and $P_i$ on its boundary.
  \smallskip

  \ni The maximality of the family $P_0,P_1,\ldots,P_k$ implies that
  in $R_i$ there is a face $F_i$ that has a nonempty intersection with
  the interior of both $P_{i-1}$ and $P_i$.  Next we prove that this
  face $F_i$ contains $x$ and $y$.  Otherwise, the cycle consisting of
  $P_{i-1}$ and $P_{i}$ has a chordal path separating $x$ from $y$. This
  path, together with $P_{i-1}$, $P_{i}$ and some $P_j$,
  $j\not\in\{i-1,i\}$ is a subdivision of $K_4$ in $M$.  \qedclaim

  Since a subdivision of $K_4$ already implies that $\dim(\vef{M}) =4$
  we continue with the assumption that there is a face $F_i$ with the specifiesd
  properties for all $i$.

  Let $u$ and $w$ be vertices of $P_i$ such that $u$ is closer to $x$
  than $w$ and $(u,w)\neq(x,y)$.  A \emph{shortcut} between $u$ and
  $w$ over $P_i$ is a path from $u$ to $w$ which is internally
  disjoint from $P_i$.  Two shortcuts, $\{u_1,w_1\}$ and $\{u_2,w_2\}$
  are \emph{crossing} if their order along $P_1$ is either
  $u_1,u_2,w_1,w_2$ or $u_2,u_1,w_2,w_1$. In particular this requires
  the four vertices to be pairwise different.

   \calc_figscale{95}
    \begin{figure}[htb]
    \centerline{\input{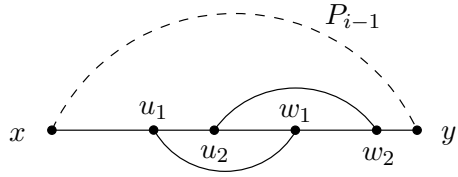}}
    \caption{Crossing shortcuts.\label{fig:pathintersection}}
    \end{figure}
    

  \Claim B. There is no crossing pair of shortcuts on $P_i$.
  \smallskip

  \ni
  Otherwise, the four vertices of the two shortcuts are the degree three vertices of a
  subdivision of $K_4$. This subdivision of $K_4$ is formed by the shortcuts together
  with $P_i$ and $P_{i-1}$. See \reffig{fig:pathintersection}.
  \qedclaim

  Let $V_i$ be the set of all vertices of $P_i$
  that are contained in $F_i \cap F_{i+1}$.

  \Claim C. Two consecutive vertices $u$ and $w$ in $V_i$ either are the two endpoints of 
  an edge or there exists a face $F$ such that $F \cap V_i = \{u,w\}$.
  \smallskip

   \calc_figscale{95}
    \begin{figure}[htb]
    \centerline{\input{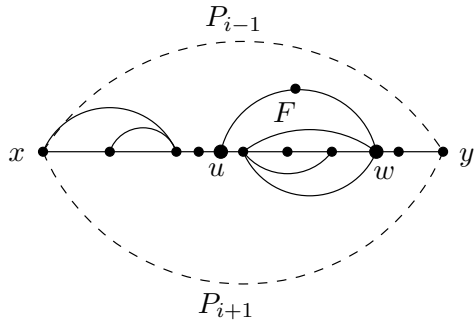}}
    \caption{The common face $F$ of $u$ and $w$.\label{fig:path_common_face}}
    \end{figure}
    

  \ni
  Suppose $\{u,w\}$ is not an edge. From Claim B it follows that there is a shortcut 
  $\{u,w\}$ over $P_i$. Essentially the same proof as for Claim A shows that 
  the subregion bounded  by $P_i$ and the shortcut between $u$ and $w$ 
  contains a face $F$ with $u,w\in F$; 
  otherwise, there is a $K_4$-subdivision.
  \qedclaim.

  The fence $\mathbf{T}_i$ consists of $V_i$ (the set of minimal elements) 
  and edges, respectively faces, over consecutive pairs of vertices in $V_i$.
  The existence of a $K_{2,3}$-subdivision between $x$ and $y$ implies that
  at least three of the fences $\mathbf{T}_0,\mathbf{T}_1,\ldots,\mathbf{T}_k$
  are nontrivial, i.e., have minima different from $x$ and~$y$. These 
  fences can be used to conclude the proof.
\end{proof}

\begin{thm}\label{thm:<4outerp}
If $\dim(\vef{M}) \leq 3$, then $M$ and $M^*$ are both weakly outerplanar.
\end{thm}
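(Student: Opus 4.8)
The plan is to reduce the statement to the two lower-bound theorems already proved, namely \refthm{thm:k4_lower_bound} (a $K_4$-subdivision forces $\dim(\vf{M}) > 3$, hence $\dim(\vef{M}) > 3$) and \refthm{thm:k23_lower_bound} (a $K_{2,3}$-subdivision in $M$ forces $\dim(\vef{M}) > 3$), together with Kuratowski-type characterization of outerplanarity. First I would recall the classical fact (Chartrand--Harary) that a graph is outerplanar if and only if it contains no subdivision of $K_4$ and no subdivision of $K_{2,3}$.

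Now suppose $\dim(\vef{M}) \leq 3$ for a planar map $M = (G,D)$. Since $\dim(\vf{M}) \leq \dim(\vef{M}) \leq 3$, \refthm{thm:k4_lower_bound} tells us that $G$ contains no subdivision of $K_4$. Likewise, \refthm{thm:k23_lower_bound} tells us that $G$ contains no subdivision of $K_{2,3}$. By the Chartrand--Harary characterization, $G$ is outerplanar, i.e., $M$ is weakly outerplanar. For the dual, I would invoke the observation recorded earlier in the paper: if $M$ is connected, then $\vef{M^*}$ is the dual poset $(\vef{M})^*$, and dimension is invariant under order duality, so $\dim(\vef{M^*}) = \dim(\vef{M}) \leq 3$. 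Applying the same argument to $M^*$ shows that $M^*$ is weakly outerplanar as well.

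The only genuine wrinkle is the connectivity hypothesis needed to identify $\vef{M^*}$ with $(\vef{M})^*$; I would handle a disconnected $M$ by first noting that $\dim(\vef{M})$ is at least the dimension of the vertex-edge-face poset of each connected component (it is an induced subposet after identifying the common outer face, or one argues componentwise), so each component and its dual is outerplanar, and a map is outerplanar exactly when every connected component is. Alternatively, and more cleanly, one observes that $M$ weakly outerplanar automatically forces $M^*$ to be a forest-like map whose outerplanarity is immediate, but the dual-poset argument is the shortest route. The step I expect to require the most care is making precise the passage to the dual in the possibly-disconnected or non-$2$-connected case; everything else is a direct citation of the preceding theorems plus a standard excluded-minor characterization, so there is no real obstacle.
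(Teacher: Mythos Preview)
Your proposal is correct and follows essentially the same route as the paper: exclude $K_4$- and $K_{2,3}$-subdivisions via Theorems~\ref{thm:k4_lower_bound} and~\ref{thm:k23_lower_bound}, invoke the excluded-subdivision characterization of outerplanarity, and pass to $M^*$ using $\dim(\vef{M^*}) = \dim((\vef{M})^*) = \dim(\vef{M})$. The paper's proof is in fact terser than yours and does not explicitly address the connectivity caveat you raise; your side remark that ``$M$ weakly outerplanar forces $M^*$ to be forest-like'' is not correct in general, but you do not rely on it.
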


\begin{proof}
From \refthm{thm:k4_lower_bound} and
\refthm{thm:k23_lower_bound}, we know that if $\dim(\vef{M}) \leq
3$ then $M$ contains neither a 
$K_4$-subdivision nor a $K_{2,3}$-subdivision.
This is equivalent to saying that the graph $G$ corresponding to the map $M$ is
outerplanar. Since $\vef{M}$ and $\vef{M^*}$ are dual orders and, hence
have the same dimension, the same necessary condition for $\dim(\vef{M}) \leq
3$ applies to $M^*$.
\end{proof}

Note that testing if $M$ and $M^*$ are weakly outerplanar, i.e., if
the corresponding graphs are outerplanar, can be done in
linear time \cite{Mitchell}.

\section{Path-like maps and permissible colorings}
\label{sec:path_like}

From \refthm{thm:<4outerp} we know that if $\dim(\vef{M}) \leq 3$, both $M$
and $M^*$ are weakly outerplanar. In this section we study the order dimension of 
2-connected maps $M$, such that $M$ is strongly outerplanar and $M^*$ is weakly 
outerplanar. 

A 2-connected component of an outerplanar map $M$ has a Hamilton cycle. 
If the graph of $M$ is simple, the Hamilton cycle is unique. This
yields a natural partition of the edges of $M$ into {\em cycle edges} and
{\em chordal edges}. The restriction of the dual graph to the graph induced
by the vertices corresponding to bounded faces is called the {\em interior dual}.
For a strongly outerplanar map, the edges of the interior dual are just the 
dual edges of the chordal edges.

We say that a simple 2-connected outerplanar map $M$ is \emph{path-like} if and only if
the interior dual of $M$ is a simple path. Note that this implies
that the Hamilton cycle is the boundary of the outer face $F_\infty$, i.e.
that $M$ is strongly outerplanar. Since the interior dual is a path, it follows 
that $M^*$ is weakly outerplanar. On the other hand, if $M$ is a 2-connected 
strongly outerplanar map and $M^*$ is weakly outerplanar, the interior dual of 
$M$ must be a simple path. Hence, $M$ is path-like if and only if $M$ is a 2-connected
outerplanar map such that $M^*$ is weakly outerplanar. 

From \refthm{thm:<4outerp} it follows that if $M$ is a 2-connected strongly 
outerplanar map with $\dim(\vef{M}) \leq 3$, $M$ must be path-like. 
We can also prove something slightly stronger.

\begin{prop}
\label{prop:pathlike}
Let $M$ be a simple 2-connected planar map with $\dim(\vef{M}) \leq 
3$. The map $M'$ 
obtained by moving all the chordal edges of $M$ into the interior of
the Hamilton cycle is path-like.
\end{prop}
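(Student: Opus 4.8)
The plan is to reduce the statement to \refthm{thm:<4outerp} by showing that the map $M'$ is a $2$-connected strongly outerplanar map whose dual is weakly outerplanar, and then invoke the characterization of path-like maps established just before the proposition. The first observation is that $\dim(\vef{M})\le 3$ already forces, by \refthm{thm:<4outerp}, that $M$ and $M^*$ are weakly outerplanar; in particular the underlying graph $G$ of $M$ is outerplanar. Since $M$ is $2$-connected, $G$ has a unique Hamilton cycle $C$, and the edges of $G$ split into cycle edges (the edges of $C$) and chordal edges. The map $M'$ is obtained from the abstract graph $G$ by drawing $C$ as the boundary of the outer face and routing every chordal edge inside $C$; this is a legitimate plane drawing because $G$ is outerplanar, and $M'$ is by construction strongly outerplanar and $2$-connected. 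It remains only to check that $(M')^{*}$ is weakly outerplanar, for then $M'$ satisfies exactly the hypothesis ``$2$-connected strongly outerplanar with weakly outerplanar dual,'' which the paragraph preceding the proposition identifies with being path-like.

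The key step is therefore: \emph{the interior dual of $M'$ is a simple path.} For a strongly outerplanar map the interior dual has one vertex per bounded (triangulated-or-not) region cut out by the chordal edges, with two regions adjacent iff they share a chordal edge; since $M$ is $2$-connected this is a tree. I would argue it is in fact a path by transporting information from $M$. The crucial point is that $M$ itself, being a map with $\dim(\vef{M})\le 3$, has $M^{*}$ outerplanar, and more importantly the chordal edges of $M$ — wherever they are drawn in $M$ — cannot ``nest with a branching pattern'': if the interior dual of $M'$ contained a vertex of degree $\ge 3$, there would be three chordal edges of $G$ that pairwise do not cross on $C$ and whose removal disconnects $C$ into three arcs meeting in a common region. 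Such a configuration of chords produces a $K_{2,3}$-subdivision in $G$ (take the three arcs as the three internally disjoint $x$–$y$ paths between suitable endpoints, using the chords to route across the central region), contradicting $K_{2,3}$-subdivision-freeness of $G$, which we already know from \refthm{thm:k23_lower_bound} and \refthm{thm:<4outerp}. Hence the interior dual of $M'$ is a path, so $M'$ is path-like.

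The main obstacle I anticipate is the bookkeeping in that central claim: making precise the correspondence between a branch vertex of the interior dual and a triple of non-crossing chords, and then exhibiting the $K_{2,3}$-subdivision cleanly (choosing the two degree-$3$ vertices of the $K_{2,3}$ and checking internal disjointness of the three paths through the branch region). One has to be a little careful that the three chords need not have common endpoints, so the three $x$–$y$ paths must be assembled from sub-arcs of $C$ together with chord edges; but since all three chords border the same face of $M'$, their endpoints appear in the right interleaving pattern on $C$ to make this work. An alternative, perhaps cleaner, route is to avoid $M'$ altogether at the graph level: observe that $K_{2,3}$-subdivision-freeness of the outerplanar graph $G$ is exactly the condition that the ``chord overlap tree'' of $G$ with respect to its Hamilton cycle is a path, and that this chord structure is a property of the abstract graph, not of the particular embedding — so it is inherited by $M'$. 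Either way, once the interior dual of $M'$ is shown to be a path, being path-like is immediate from the definitions, and the proposition follows.
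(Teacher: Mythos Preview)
Your reduction goes astray at the central claim: a branch vertex in the interior dual of $M'$ does \emph{not} force a $K_{2,3}$-subdivision in the graph $G$. Take the hexagon $v_1v_2\cdots v_6$ with the three chords $v_1v_3$, $v_3v_5$, $v_5v_1$. This is a maximal outerplanar graph, hence $K_{2,3}$-subdivision free, yet its interior dual is the star $K_{1,3}$. More generally, \emph{every} $2$-connected outerplanar graph is $K_{2,3}$-subdivision free, so ``$G$ is $K_{2,3}$-free'' carries no information beyond outerplanarity and certainly cannot force the interior dual to be a path. Your alternative route has the same flaw: the assertion that $K_{2,3}$-freeness of $G$ is equivalent to the chord tree being a path is simply false.

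What a branch vertex in the interior dual of $M'$ \emph{does} give you is a $K_{2,3}$-subdivision in $(M')^{*}$: the branching bounded face and the outer face are the two degree-$3$ vertices, joined by three internally disjoint paths through the three neighbouring bounded faces. The hypothesis you actually need to contradict is therefore outerplanarity of $M^{*}$, not of $M$. But $M$ and $M'$ are different embeddings of $G$, so $(M')^{*}$ and $M^{*}$ are genuinely different maps, and passing the $K_{2,3}$ from one dual to the other is exactly the nontrivial content of the proposition. The paper handles this by flipping the chordal edges outside one at a time, $M'=M_0,M_1,\dots,M_k=M$, and proving by induction that each $M_i^{*}$ retains a $K_{2,3}$-subdivision with the Hamilton-cycle dual separating its two branch vertices; the key observation is that flipping a chord corresponds in the dual to contracting one edge, and any $F^{*}$--$G^{*}$ path through that edge has length at least $3$, so the contraction cannot kill the subdivision. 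That inductive edge-by-edge transfer is the idea your proposal is missing.
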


\Proof  Suppose not. Then the interior dual of $M'$ contains a vertex of
  degree at least 3, and hence its dual $(M')^*$ contains a
  subdivision of $K_{2,3}$ with the dual of one degree 3 vertex inside
  the Hamilton cycle $H$ and the dual of the other outside. We proceed
  to show that we can move the necessary chordal edges outside, one by
  one, to create $M$ without destroying the $K_{2,3}$-subdivision in
  the dual.

  We do this as follows: let $M' = M_0,M_1,\ldots,M_k = M$ be a
  sequence of maps such that $M_{j+1}$ is obtained from $M_j$ by
  moving a chordal edge from the inside to the outside of the
  Hamilton cycle.  The proposition is implied by the following lemma.

  \begin{lemma}
  For each map $M_i$, $i=0,1,\ldots,k$, the dual map $M_i^*$
  contains a $K_{2,3}$-subdivision such that $H^*$ separates the
  two vertices of degree 3.  
  \end{lemma}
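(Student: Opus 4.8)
The plan is to induct on $i$, the base case $i=0$ being exactly the hypothesis we derived at the start of the proof of Proposition~\ref{prop:pathlike}: since $M' = M_0$ is not path-like, its interior dual has a vertex of degree at least $3$, and in $(M')^* = M_0^*$ this produces a $K_{2,3}$-subdivision with $H^*$ (the dual cycle of the Hamilton cycle $H$, i.e.\ the facial cycle of the outer vertex $F_\infty^*$) separating the two degree-$3$ vertices. Concretely, $H^*$ is a cycle in $M_i^*$ whose interior region corresponds to the bounded faces of $M_i$ and whose exterior corresponds to the outer face; moving a chordal edge $e$ of $M_i$ from inside $H$ to outside $H$ corresponds in the dual to the local modification where $e^*$ is deleted from the interior dual and the incidence structure of $H^*$ is altered so that $e^*$ now lies outside $H^*$.

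For the inductive step, suppose $M_i^*$ has a $K_{2,3}$-subdivision $S$ with degree-$3$ vertices $p, q$ separated by $H^*$; say $p$ is in the interior (a bounded face of $M_i$) and $q$ in the exterior ($q = F_\infty^*$, or more precisely $q$ lies in the exterior region of $H^*$). We pass from $M_i$ to $M_{i+1}$ by moving one chordal edge $e$ to the outside; let $e^*$ be its dual edge. The key step is a case analysis on how $e^*$ relates to $S$. If $e^* \notin S$, then $S$ survives essentially unchanged in $M_{i+1}^*$, and $H^*$ (suitably re-identified as a cycle in $M_{i+1}^*$) still separates $p$ from $q$, since moving $e$ outward only enlarges the exterior region and $p$, being an interior face incident to $S$, stays interior. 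If $e^* \in S$, then $e^*$ is one of the edges of a subdivided path of $S$, and moving it outside means it now connects two vertices that end up on the exterior side of $H^*$; the remedy is to reroute that path of $S$ along the boundary $H^*$ itself (the new position of $e^*$ plus the arc of $H^*$ between its endpoints forms an alternative path), or, if $e^*$ was incident to $p$ or $q$, to absorb the change into the branch vertex. In all subcases one exhibits an explicit new $K_{2,3}$-subdivision in $M_{i+1}^*$ with a degree-$3$ vertex on each side of the (new) $H^*$.

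The main obstacle I expect is bookkeeping the identification of $H^*$ across the sequence $M_0^*, \ldots, M_k^*$ — the ``same'' Hamilton cycle $H$ of $M$ is a facial cycle of $F_\infty$, but $F_\infty$ itself changes combinatorially as chordal edges migrate outside, so one must fix once and for all that $H$ refers to the cycle formed by the cycle edges of $M$ (which never move) and track $H^*$ as the corresponding closed walk in each dual. Once that is pinned down, the delicate point is the subcase where $e^*$ is an internal edge of $S$ with both endpoints migrating to the exterior side: here one genuinely needs the rerouting-along-$H^*$ argument, and must check that the rerouted path remains internally disjoint from the other two paths of $S$ and from each other — this uses that $S$ met $H^*$ in a controlled way (each of the three $p$–$q$ paths of $S$ crosses $H^*$ an odd number of times, and one can choose $S$ minimally so that each crosses exactly once). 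With a minimal choice of $S$, the case analysis collapses to a small number of genuinely distinct local pictures, each dispatched by an explicit reroute, completing the induction and hence the proof of Proposition~\ref{prop:pathlike}.
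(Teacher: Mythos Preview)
Your inductive framework and base case are right, and you correctly anticipate that the crux is controlling what happens in the dual when a single chordal edge $e$ migrates from the inside of $H$ to the outside. But your description of that dual operation is where the argument goes wrong. Moving $e$ outside is \emph{not} ``$e^*$ is deleted from the interior dual and then reinserted outside $H^*$.'' In $M_i$ the edge $e$ separates two interior faces $A,B$; in $M_{i+1}$ those two faces merge into one, so in the dual the vertices $A^*,B^*$ are \emph{identified} --- i.e.\ $e^*$ is contracted on the inside --- while simultaneously some exterior dual vertex is split in two. Because of the contraction, your case ``$e^*\notin S$ so $S$ survives essentially unchanged'' is not valid: both endpoints $A^*,B^*$ of $e^*$ may lie on $S$ (on different paths, say) even when the edge $e^*$ itself does not, and identifying them genuinely changes $S$. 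Your rerouting-along-$H^*$ plan for the other case is also left at the level of intention; once you try to make it precise you run into exactly the merge issue above.

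The paper sidesteps all of this with one clean trick you are missing: instead of moving $e$ in one step, it first \emph{adds} a parallel copy $e'=\{u,v\}$ outside $H$ (producing an intermediate map $M_i'$) and only then deletes the inside copy $e$. In the dual, the first step is a vertex split (which can never destroy a subdivision), and the second is the contraction of the single edge $e^*$. Now one only needs to check that contracting $e^*$ keeps a $K_{2,3}$-subdivision between $F^*$ and $G^*$, and for this the paper proves a short Claim: any simple $F^*$--$G^*$ path in $(M_i')^*$ through $e^*$ has at least three edges (it must use $(e')^*$ as well, since $F$ and $G$ lie on the same side of the $2$-cycle $\{e,e'\}$, and it must also use the dual of some cycle edge to cross $H^*$). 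That length bound is exactly what makes the contraction harmless. This factoring ``add then delete'' replaces your open-ended reroute/case-analysis with a two-line counting argument.
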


  \Proof We prove the lemma by induction on $i$. 
  We have already seen that the statement is true for $M_0$.

  Suppose the statement is true for $M_i$. Let $e=\{u,v\}$ be the edge that
  has to be moved to the outside of $H$ to get from $M_i$ to
  $M_{i+1}$.  Let $F^*$ and $G^*$ be the degree 3 vertices in a
  $K_{2,3}$-subdivision in $M_i^*$, where $F$ is inside $H$. We
  construct a new map $M_i'$, by adding the edge $e'=\{u,v\}$ to $M_i$
  outside $H$, see~\reffig{fig:pathlike_k23}. Note that $F$ and $G$
  must be on the same side of the cycle $\{e,e'\}$, since otherwise
  $\{e^*,(e')^*\}$ is a 2-edge cut in $(M'_i)^*$ separating $F^*$ from
  $G^*$; this is impossible as $F^*$ and $G^*$ are the degree 3 vertices in a
  $K_{2,3}$-subdivision.

   \calc_figscale{100}
    \begin{figure}[htb]
    \centerline{\input{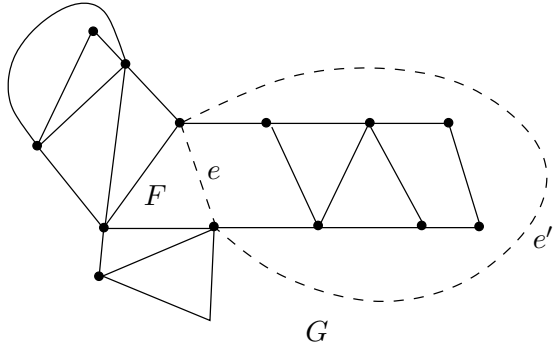}}
    \caption{The map $M_i'$ is constructed by adding $e'$ to $M_i$.\label{fig:pathlike_k23}}
    \end{figure}
    

  \Claim A. Let $P^*$ be a simple $F^*$-$G^*$ path in $(M_i')^*$. If
  $e^* \in P^*$, then $P^*$ consists of at least 3 edges.
  \smallskip

  \ni Suppose $e^*\in P^*$. Since $P^*$ is simple, and $F$ and $G$ are
  on the same side of the cycle $\{e,e'\}$, it follows that $(e')^*
  \in P^*$.  Moreover, the dual $H^*$ of the Hamilton cycle $H$ separates
  $F^*$ from $G^*$, so $P^*$ must contain the dual of a cycle edge.
  But neither $e$ nor $e'$ are cycle edges, so the claim follows.
  \qedclaim
  
  \ni Now, $M_{i+1}$ is obtained by removing $e$ from $M_i'$. In
  $(M_i')^*$, this corresponds to the contraction of edge $e^*$.  If
  $e^*$ is not on any $F^*$-$G^*$ path, then $M^*_{i+1}$ contains a
  $K_{2,3}$-subdivision. On the other hand, an $F^*$-$G^*$ path
  containing $e^*$ has at least 3 edges, hence, the contraction of $e^*$ cannot
  destroy the $K_{2,3}$-subdivision. This completes the proof of the lemma and the
  proposition.
\qed

For later reference we restate the essence of the proof:

\begin{cor}\label{cor:k23-free}
Let $M'$ be obtained from a simple weakly outerplanar map $M$ by
flipping all chordal edges to the interior of the Hamilton cycle.  If
$(M')^*$ contains a $K_{2,3}$-subdivision, then so does $M^*$.
\end{cor}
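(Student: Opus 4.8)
The plan is to observe that \refcor{cor:k23-free} is simply the contrapositive-free restatement of the lemma inside the proof of \refprop{prop:pathlike}, generalized from the $2$-connected case to an arbitrary simple weakly outerplanar map $M$. First I would note that it suffices to handle one $2$-connected component at a time: $M^*$ contains a $K_{2,3}$-subdivision if and only if some $2$-connected block of $M$, after closing up with bridges, yields such a subdivision in the corresponding dual block, and flipping chordal edges is a block-local operation. So without loss of generality $M$ is $2$-connected, and then $M$ has a (unique, since $M$ is simple) Hamilton cycle $H$, partitioning $E(M)$ into cycle edges and chordal edges. The map $M'$ is obtained by moving every chordal edge to the interior of $H$; equivalently, $M$ is obtained from $M'$ by flipping some subset of the chordal edges back to the exterior, one at a time.

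Next I would set up the same sequence $M' = M_0, M_1, \ldots, M_k = M$ used in the proof of \refprop{prop:pathlike}, where $M_{j+1}$ arises from $M_j$ by moving one chordal edge from the inside to the outside of $H$, and then invoke exactly the lemma proved there: by induction on $j$, if $M_j^*$ contains a $K_{2,3}$-subdivision in which $H^*$ separates the two degree-$3$ vertices, then so does $M_{j+1}^*$. The base case is the hypothesis applied to $M_0 = M'$ — here I should remark that a $K_{2,3}$-subdivision in $(M')^*$ automatically has its two degree-$3$ vertices separated by $H^*$, because one of them is a bounded face lying in the interior of $H$ (all chordal edges are inside) and the other corresponds to an unbounded region or to a face whose incidences force it outside; more robustly, one checks that in $(M')^*$ the cycle $H^*$ is an edge cut separating the dual vertex of the outer face $F_\infty$ from every dual vertex of a bounded face, and if both degree-$3$ vertices were on the same side of $H^*$ then deleting the two cycle-edge duals incident to the appropriate region would give a $2$-edge cut separating them, contradicting $3$-connectivity of the degree-$3$ vertices in the subdivision. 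The inductive step is verbatim the argument already given: add the flipped copy $e'$ outside $H$ to form $M_i'$, note $F$ and $G$ lie on the same side of the $2$-cycle $\{e, e'\}$, show by Claim A in that proof that any $F^*$–$G^*$ path through $e^*$ uses at least three edges (since it must also use $(e')^*$ and a genuine cycle-edge dual), and conclude that contracting $e^*$ to pass from $(M_i')^*$ to $M_{i+1}^*$ cannot destroy the subdivision.

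Finally, chaining the lemma from $j=0$ to $j=k$ gives a $K_{2,3}$-subdivision in $M_k^* = M^*$, which is the conclusion of \refcor{cor:k23-free}. I would close by remarking that the statement we have actually proved is slightly stronger: the $K_{2,3}$-subdivision found in $M^*$ still has $H^*$ separating its degree-$3$ vertices, though only the bare existence is needed downstream.

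The main obstacle I anticipate is the base case rather than the inductive step. In \refprop{prop:pathlike} the starting map $M' = M_0$ was assumed to have $\dim(\vef{M'}) \le 3$, which forced (via \refthm{thm:<4outerp}, applied to the interior-dual structure) the existence of a $K_{2,3}$-subdivision \emph{with the separation property}. In the present corollary we are handed a bare $K_{2,3}$-subdivision in $(M')^*$ with no separation hypothesis, so the work is to argue that, because \emph{all} chordal edges of $M'$ lie inside $H$, every $K_{2,3}$-subdivision of $(M')^*$ — or at least one of them, after possibly re-routing — must be cut by $H^*$. This is a purely topological/planarity fact about the dual of a maximal-exterior-cycle outerplane map and should follow from the edge-cut observation above, but it is the step that genuinely needs to be written out rather than cited.
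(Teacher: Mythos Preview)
Your approach is essentially the paper's own: the corollary is literally introduced with ``we restate the essence of the proof,'' meaning the lemma and Claim~A inside the proof of \refprop{prop:pathlike} \emph{are} the proof of the corollary, and you have correctly extracted that argument.

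Two remarks on the places where you diverge. First, the block reduction at the start is unnecessary: the phrase ``the Hamilton cycle'' already presupposes $2$-connectedness, and the corollary is only ever applied (in \refthm{thm:vf_dim_upper_b_2c}) to $2$-connected maps. Second, and more substantively, the paper handles your ``main obstacle'' --- the base case --- by a cleaner route than the edge-cut argument you sketch. In the proof of \refprop{prop:pathlike} the starting hypothesis is not ``$(M')^*$ contains some $K_{2,3}$-subdivision'' but rather ``$M'$ is not path-like,'' i.e.\ the interior dual of $M'$ has a vertex of degree~$\geq 3$; from that vertex the desired $K_{2,3}$-subdivision (with one branch vertex a bounded face and the other $F_\infty^*$) is constructed \emph{explicitly}, so the separation by $H^*$ is built in. These two hypotheses are equivalent for a strongly outerplanar $M'$, since $M'$ path-like $\Leftrightarrow$ $(M')^*$ outerplanar $\Leftrightarrow$ $(M')^*$ is $K_{2,3}$-subdivision free. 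So rather than arguing that an arbitrary $K_{2,3}$-subdivision in $(M')^*$ must be separated by $H^*$, you can simply replace it by the explicit one. (Your edge-cut sketch, as written, is not quite right: the clean reason both degree-$3$ vertices cannot be bounded faces is that the interior dual is a tree, so any two internally disjoint $F^*$--$G^*$ paths beyond the tree path must pass through the single vertex $F_\infty^*$.)
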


In the rest of this section, we consider {\em maximal path-like maps},
i.e., path-like maps where all interior faces are triangles.  Consider
a triangle of a maximal path-like map $M$. Each of the three vertices
forms a critical pair with a face or edge that is above the other two
vertices of the triangle. In \reffig{fig:triangle1} these critical
pairs are $(u,F_{u})$, $(v,F_{v})$, $(w, e_{w})$.  Note that the
critical pairs associated to a triangle are in bijection to its
angles.

Suppose $\dim(\vef{M})=3$ and $L_1,L_2,L_3$ is a realizer.  From this
we obtain a 3-coloring of the of inner angles of the map: For every
angle choose a color $i$ such that the critical pair corresponding to
the angle is reversed in $L_i$.  For convenience we interchangably use
1, 2, 3 and red, green, blue as names for the three colors in the angle
coloring.

   \calc_figscale{45}
    \begin{figure}[htb]
    \centerline{\input{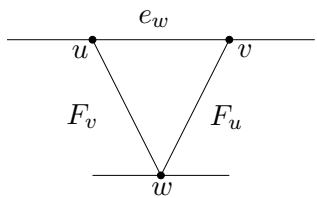}}
    \caption{The critical pairs of a triangle.\label{fig:triangle1}}
    \end{figure}
    

We go on to prove some properties of such an angle 3-coloring
for a maximal path-like map $M$ with $\dim(\vef{M}) \leq 3$.
In the following we will use $F_u$ as the default name for the
edge or face in the critical pair with $u$. For example in the proof
of the next lemma at least one of $F_u,F_v,F_w$ must be an edge 
since the map $M$ is path like.

\begin{lemma}
\label{lemma:angle_color}
No two angles in a triangle can have the same color.
\end{lemma}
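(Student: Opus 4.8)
The plan is to argue by contradiction using the alternating-cycle characterization of dimension, exploiting the fact that each triangle of a path-like map produces three critical pairs — one per angle — one of which involves an edge rather than a face. Suppose two angles of a triangle $\{u,v,w\}$ receive the same color, say both are colored $1$, and consider the critical pairs $(u,F_u)$, $(v,F_v)$, $(w,e_w)$ attached to the angles as in \reffig{fig:triangle1}. I would first record the elementary geometric facts about this configuration: each of $F_u,F_v,F_w$ lies above exactly the two ``other'' vertices of the triangle (e.g. $u<F_u$ fails while $v<F_u$ and $w<F_u$), and since $M$ is path-like at least one of $F_u,F_v,F_w$ must be an edge (the triangle has at most two neighbouring triangular faces across its three sides). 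So the case analysis splits into: (a) the two equally colored angles are the two whose critical-pair top element is a face, or (b) one of them is the angle whose top element is an edge.

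The heart of the argument is that in either case the two critical pairs with the common color form a monochromatic alternating cycle of length two, which is forbidden in a realizer. Recall that $(a_0,b_0),(a_1,b_1)$ is an alternating cycle when $a_0<b_1$ and $a_1<b_0$ in $\vef{M}$. Take, say, the pairs $(u,F_u)$ and $(v,F_v)$: I claim $u<F_v$ and $v<F_u$. Indeed $F_u$ is the face (or edge) incident to the triangle that sits above the two triangle-vertices other than $u$, namely $v$ and $w$; in particular $v<F_u$. Symmetrically $u<F_v$. Hence these two critical pairs, both assigned to $L_1$, must both be reversed in $L_1$ — meaning $F_u<u$ and $F_v<v$ in $L_1$ — but then $L_1$ would have to reverse a $2$-element alternating cycle, which is impossible in a single linear extension (equivalently, $\dim(\mathbf{2+2})=2$ forces the two pairs into different colors). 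The same computation handles the mixed case involving $e_w$: $e_w$ is the edge of the triangle on the side opposite $w$, so it is above $u$ and above $v$, giving $u<e_w$ and, paired against $(u,F_u)$ where $w<F_u$, an alternating cycle again. I would present this uniformly: for any two of the three angles, the two associated critical-pair ``tops'' each lie above the ``bottom'' of the other pair, because each top is an element incident to the triangle dominating precisely the complementary pair of triangle vertices.

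The main obstacle to watch is purely bookkeeping: making sure that in every sub-case the ``top'' element of one pair genuinely dominates the minimal vertex of the other pair in $\vef{M}$ (not merely in the drawing), including the degenerate situations where a side of the triangle is a chordal edge shared with the outer face versus an interior triangular neighbour, and the situation where $F_u$ or $F_v$ is itself an edge. Each of these reduces to the observation that an edge or face bounding a triangle contains (in the inclusion order) exactly the two vertices on its side of that triangle, so I would state that once as a small preliminary observation and then the three pairwise checks become immediate. Once the $2+2$ obstruction is in hand, the conclusion ``no two angles of a triangle share a color'' is immediate, since any monochromatic class among the three angles would contain such a pair.
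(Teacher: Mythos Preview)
Your argument is correct and is essentially the paper's own proof: any two of the three critical pairs associated to a triangle form an alternating cycle (of length two), so they cannot both be reversed in the same linear extension. The paper states this in two lines by adopting the convention that $F_u$ denotes the edge-or-face opposite $u$ uniformly, which makes your case split between ``face tops'' and ``edge tops'' unnecessary.
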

\begin{proof}
Consider a triangle with the angle coloring described above. Any two
of the critical pairs $(u,F_{u})$, $(v,F_{v})$, $(w, F_{w})$ form an
alternating cycle. Hence, no two pairs can be reversed in the same
linear extension.
\end{proof}

\begin{lemma}
\label{lemma:edge_color}

Let $e=\{a,b\}$ be a chordal edge. The four angles $\alpha_{\ell}$,
$\alpha_r$, $\beta_{\ell}$ and $\beta_r$ incident on $e$ at $a$ and
$b$ are colored such that all three colors are used, and one of the
pairs $(\alpha_{\ell},\alpha_r$) or $(\beta_{\ell},\beta_r)$ is
monochromatic.
   \calc_figscale{70}
    \begin{figure}[htb]
    \centerline{\input{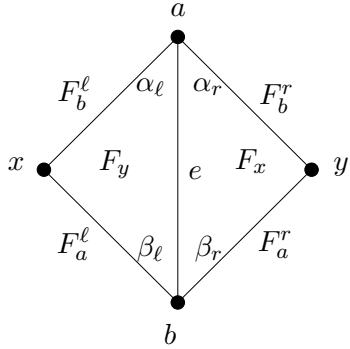}}
    \caption{Colors and critical pairs around a chordal edge.\label{fig:new-edge-color}}
    \end{figure}
\end{lemma}

\begin{proof}
We refer to \reffig{fig:new-edge-color}.
Suppose $\alpha_{\ell}=1$ and
$\alpha_r=2$. This implies that in $L_1$ we have
$F_b^\ell$ and $F_b^r$ above $a$ above $F_a^{\ell}$ above $b$.
In $L_2$ we have the same order with $F_a^{r}$ taking the role of $F_a^{\ell}$.
Hence $b$ has to be above both $F_b^\ell$ and $F_b^r$ in $L_3$ which
is equivalent to $\beta_\ell=\beta_r=3$.  

Suppose both pairs of angles have the same colors, say
$\alpha_{\ell}=\alpha_r=1$ and $\beta_\ell=\beta_r=2$
Then the third angle in both triangles (at $x$ and $y$,
respectively) must have color 3. This induces a
monochromatic alternating cycle $(x,F_{x})$, $(y,F_{y})$,
a contradiction.
\end{proof}

By \reflemma{lemma:edge_color} we can encode the angle coloring as
an oriented coloring of the chordal edges: each chordal edge gets the
color that appears twice around it and is oriented towards the
endpoint where this happens.

The \emph{orientation} of an interior triangle is either
\emph{clockwise} or \emph{counterclockwise} depending on the cyclic
reading which shows the colors 1,2,3 in this order. \reflemma{lemma:angle_color}
implies that the orientation of interior triangles is defined.

   \calc_figscale{65}
    \begin{figure}[htb]
    \centerline{\input{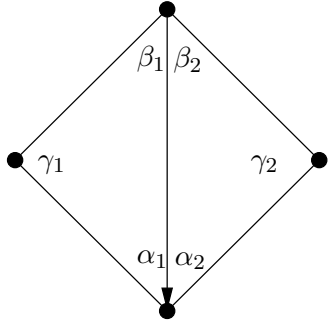}}
    \caption{The triangles must have the same orientation.\label{fig:same_orient}}
    \end{figure}
    

\begin{lemma}
\label{lemma:same_orient}
All interior triangles have the same orientation.
\end{lemma}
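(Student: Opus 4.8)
The plan is to exploit the fact that the interior dual of a maximal path-like map is a path, so any two interior triangles $T$ and $T'$ that share a chordal edge $e$ are ``glued'' along $e$, and it suffices to show that adjacent triangles in this path have the same orientation; the general claim then follows by transitivity along the interior dual path. So I would fix a chordal edge $e=\{a,b\}$ separating triangles $T$ (with third vertex $x$) and $T'$ (with third vertex $y$), as in Figure~\ref{fig:same_orient}, and compare the two orientations directly from the angle coloring around $e$.

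First I would invoke \reflemma{lemma:edge_color}: the four angles at $e$ use all three colors, and (say, after renaming) one of the two pairs, say $(\beta_\ell,\beta_r)$ at $b$, is monochromatic of some color, while the two angles $\alpha_\ell,\alpha_r$ at $a$ carry the remaining two colors. Then the orientation of $T$ is determined by the cyclic order of the three colors on its three angles: the angle of $T$ at $b$, the angle of $T$ at $a$, and the angle of $T$ at $x$; since the $b$-angle and $a$-angle of $T$ are two of the three distinct colors, and by \reflemma{lemma:angle_color} the third angle gets the unique remaining color, the orientation is forced by how those two angles sit around $e$. The same reasoning applies to $T'$: its $b$-angle is the other monochromatic angle $\beta_{r'}$ (same color as $\beta_\ell$ or $\beta_r$), its $a$-angle is the unique $\alpha$-color not used by $T$'s $a$-angle, and its $y$-angle gets the remaining color. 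Reading the colors $1,2,3$ cyclically around $T'$ and comparing with $T$, I expect the two readings to agree precisely because the chordal edge $e$ is traversed in opposite directions when one walks around $T$ versus around $T'$, and this reversal exactly compensates for the swap of the two $\alpha$-colors and the repetition of the $\beta$-color.

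The key bookkeeping step — and the main obstacle — is to verify that the cancellation really works out in \emph{all} cases allowed by \reflemma{lemma:edge_color}, rather than just the one representative case. Concretely: \reflemma{lemma:edge_color} leaves a choice of which of the two pairs at $e$ is monochromatic and which color that is, and a choice of how the two nonmonochromatic angles are colored; I would argue that by the symmetry of colors (the $1,2,3\mapsto 2,3,1$ cyclic relabeling and the mirror symmetry) all these cases reduce to one, so it is enough to push through a single explicit instance, e.g. $\beta_\ell=\beta_r=3$, $\alpha_\ell=1$, $\alpha_r=2$ (the very configuration used in the proof of \reflemma{lemma:edge_color}), compute that both $T$ and $T'$ then read clockwise (or both counterclockwise), and invoke the symmetry to finish. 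Finally, since the interior dual is connected (a path), propagating ``same orientation as the neighbor'' along it forces all interior triangles to share a single orientation, completing the proof.
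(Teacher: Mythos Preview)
Your proposal is correct and follows essentially the same route as the paper: reduce to two triangles sharing a chordal edge, invoke \reflemma{lemma:edge_color} and \reflemma{lemma:angle_color} to pin down the four angles around that edge, verify in one representative case (using color symmetry for the rest) that the two cyclic readings agree, and then propagate along the interior dual path. The paper's write-up is terser---it names the angles $(\alpha_1,\beta_1,\gamma_1)$ and $(\alpha_2,\beta_2,\gamma_2)$ for the two triangles and dispatches the computation in two lines---but the logic is the same as yours.
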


\begin{proof}
This is a direct consequence of \reflemma{lemma:edge_color} and
\reflemma{lemma:angle_color}. 
Referring to \reffig{fig:same_orient} we discuss one of the cases:
Suppose the left triangle in the figure is counterclockwise, i,e.,
$(\alpha_1,\beta_1,\gamma_1)= 
(i,i+1,i+2)$. The orientation of the edge implies $\alpha_2=i$, with
\reflemma{lemma:edge_color} we get $\beta_2=i+2$. This shows that
the right triangle is counterclockwise as well. 
\end{proof}
 
\begin{lemma}
  \label{lemma:chordal_edge}
  If $c$ is the color of a chordal edge $e$, then $e> F_{\infty}$ in
  $L_c$.
\end{lemma}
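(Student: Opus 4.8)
The claim is that if a chordal edge $e$ carries color $c$ in the angle coloring coming from a realizer $\{L_1,L_2,L_3\}$, then $e$ lies above the outer face $F_\infty$ in the linear extension $L_c$. My plan is to work directly from the definitions: the color $c$ of $e=\{a,b\}$ is, by the encoding introduced after \reflemma{lemma:edge_color}, the color appearing twice among the four angles at $e$, say at the endpoint $b$; so $\beta_\ell = \beta_r = c$. I want to extract from this the position of $e$ relative to $F_\infty$ in $L_c$.

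**Key steps.** First I recall what $\beta_\ell = \beta_r = c$ means in $L_c$. Consulting the situation in \reffig{fig:new-edge-color} (with the labelling from the proof of \reflemma{lemma:edge_color}): the critical pairs reversed in $L_c$ at the two angles at $b$ are $(b,F_b^\ell)$ and $(b,F_b^r)$, so in $L_c$ we have $b$ above both interior faces $F_b^\ell$ and $F_b^r$ adjacent to $e$ on the two sides, while $a$ sits below $e$. The edge $e$ itself must lie above $b$ in $L_c$? No — rather, since $(b, F_b^\ell)$ is reversed, $F_b^\ell < b$, and $e$ is incident to both $a$ and $b$ so $a,b < e$ in any linear extension. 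Thus in $L_c$ we get $F_b^\ell < b < e$ and $F_b^r < b < e$. The second step is the heart: I must show that $e$ being above $b$ in $L_c$, together with $b$ being above both faces flanking $e$, forces $e$ above $F_\infty$. Here I would use the geometry of the path-like map. The chordal edge $e$ separates the interior of the Hamilton cycle; on one side of $e$ the region is bounded, containing the faces $F_b^\ell$ (and eventually leading to $F_\infty$ across a sequence of chordal edges or Hamilton edges). I expect the cleanest argument is: since all interior triangles have the same orientation (\reflemma{lemma:same_orient}) and the coloring is consistent, the color $c$ edges form a structure such that following from $e$ outward through the comparabilities $e < (\text{faces on its upper side})$ we eventually reach $F_\infty$; but more likely the intended argument is local and uses the adjacency of $e$ to $F_\infty$ in some configuration, or proceeds by induction along the path-like structure.

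**Reconsidering — the induction.** On reflection, the map is path-like, so the interior dual is a path $T_1, T_2, \dots, T_m$ of triangles. A chordal edge $e$ is dual to an edge of this path, splitting it into two subpaths; one subpath's triangles all lie on one side of $e$, the other's on the other side, and $F_\infty$ is incident to cycle edges on both ends. I would prove the statement by induction on the distance (in the path) from $e$ to the end containing the part of $F_\infty$ on the side toward which $e$ is oriented. In the base case $e$ is incident to $F_\infty$ directly (the triangle $T_b$ on the $b$-side is an "ear" whose third side is a cycle edge bounding $F_\infty$), and then $F_b^\ell$ or $F_b^r$ is $F_\infty$ itself, giving $F_\infty = F_b^\bullet < b < e$ in $L_c$ immediately. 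For the inductive step, the face $F_b^\bullet$ on the outer side of $e$ is itself a triangle with a chordal edge $e'$ as one side; I would show $e'$ also has color $c$ and is oriented "outward" — using \reflemma{lemma:edge_color}, \reflemma{lemma:same_orient}, and the fact that $\beta_\bullet = c$ propagates — so by induction $e' > F_\infty$ in $L_c$, and then chaining $F_\infty < e'$... wait, I need $e > F_\infty$, and I have $e > b > F_b^\bullet$ where $F_b^\bullet$ is the triangle with side $e'$; since $e' > F_\infty$ by induction and $e' < (\text{something}) $... the chaining needs care. The safe route: show directly that in $L_c$, every interior face and every vertex on the outer side of $e$ lies below $e$, by a sweep using the reversed critical pairs of all the $c$-colored angles encountered, terminating at $F_\infty$.

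**Main obstacle.** The hard part will be making the "propagation along the path" rigorous: verifying that the color $c$ and the outward orientation are inherited by the next chordal edge $e'$ toward $F_\infty$, and assembling the chain of comparabilities in $L_c$ so that it terminates with $F_\infty$ strictly below $e$ rather than merely below some intermediate element. I expect \reflemma{lemma:edge_color} (forcing the monochromatic pair of angles) and \reflemma{lemma:same_orient} to be exactly the tools that control this inheritance, and a careful bookkeeping of which faces are the "$F_b^\ell, F_b^r$" at each step — together with the observation that $F_\infty$ is the unique unbounded face and is reached at the end of the interior-dual path — should close the induction.
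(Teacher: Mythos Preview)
Your approach has two genuine gaps, and you are missing the short local argument the paper uses.

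\textbf{First gap: misreading the critical partners.} When you write that $\beta_\ell=\beta_r=c$ gives $b>F_b^\ell$ and $b>F_b^r$ in $L_c$ with ``$F_b^\ell$ and $F_b^r$ the interior faces adjacent to $e$ on the two sides'', that is not what $F_b^\ell,F_b^r$ are. In the notation of \reflemma{lemma:edge_color} (see \reffig{fig:new-edge-color}), $F_b^\ell$ is the critical partner of $b$ in the \emph{left} triangle, namely the edge or face opposite $b$ there --- the object containing $a$ and the left apex vertex, not a face containing $e$. So the chain $F_b^\bullet<b<e$ you wrote down is not the relation you think it is, and it does not move you toward $F_\infty$.

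\textbf{Second gap: the induction is built on a false claim.} Your inductive step asserts that the next chordal edge $e'$ (bounding the triangle on the far side of $e$) ``also has color $c$ and is oriented outward''. This is not true in general: consecutive chordal edges along the interior-dual path typically carry different colors (otherwise there would be no bicolored faces at all, contradicting the structure exhibited in the canonical map and governed by \refprop{prop:bicolor}). So the propagation you need simply does not hold, and the induction cannot be completed as stated.

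\textbf{What the paper does instead.} The argument is entirely local and two lines long. Look not at the angles $\beta_\ell,\beta_r$ at the endpoints of $e$, but at the angles $\gamma_1,\gamma_2$ at the two apex vertices $w_1,w_2$ opposite $e$ in the two adjacent triangles. By \reflemma{lemma:angle_color} and \reflemma{lemma:same_orient} (cf.\ \reffig{fig:same_orient}) one has $\gamma_1\neq\gamma_2$ and $c\notin\{\gamma_1,\gamma_2\}$. The critical pair at $\gamma_i$ is $(w_i,T)$ with $T$ the triangle on the other side of $e$; reversing it in $L_{\gamma_i}$ gives $e<T<w_i<F_\infty$ there (the last inequality because $M$ is outerplanar). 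Hence $e<F_\infty$ in the two linear extensions $L_{\gamma_1},L_{\gamma_2}$, and the critical pair $(e,F_\infty)$ must be reversed in the only remaining one, $L_c$. No induction, no path-chasing.
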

\begin{proof}
  Again referring to \reffig{fig:same_orient} we observe that 
  $\gamma_1\neq\gamma_2$  and
  $c \neq \gamma_1,\gamma_2$ by \reflemma{lemma:same_orient}. 
  Hence, $e < F_{\infty}$ in  $L_{\gamma_1}$ and $L_{\gamma_2}$.
  Therefore $e> F_{\infty}$ in $L_c$.
\end{proof}

   \calc_figscale{65}
    \begin{figure}[htb]
    \centerline{\input{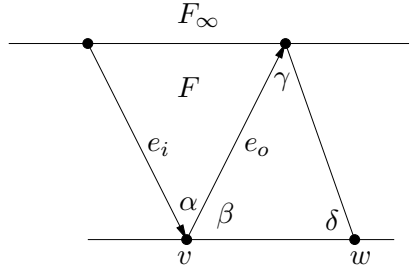}}
    \caption{A vertex must be either a sink or a source.\label{fig:in-out_edge}}
    \end{figure}
    

\begin{lemma}
\label{lemma:sink_or_source}
  Each vertex is either a sink or a source w.r.t.~the orientation of the chordal edges.
\end{lemma}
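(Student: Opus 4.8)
The plan is to argue by contradiction: suppose some vertex $v$ is incident to a chordal edge oriented into $v$ and another chordal edge oriented out of $v$. Since the interior dual is a path, the triangles incident to $v$ occur in a linear fan around $v$, and the chordal edges at $v$ appear in this fan in some order. First I would isolate two consecutive chordal edges $e$ and $e'$ at $v$ sharing a common triangle $T$, with $e$ oriented toward $v$ and $e'$ oriented away from $v$ (if the orientations of the chordal edges at $v$ are mixed, some consecutive pair must switch direction at $v$). Write the angle of $T$ at $v$ as $\gamma$. By Lemma~\ref{lemma:edge_color} applied to $e$, the color doubled around $e$ is its color $c(e)$, and it is doubled at $v$; so the two angles at $v$ on the two triangles flanking $e$ both have color $c(e)$, and in particular the angle of $T$ at $v$ has color $\gamma = c(e)$. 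Applying Lemma~\ref{lemma:edge_color} to $e'$: since $e'$ points away from $v$, the doubled color $c(e')$ is doubled at the \emph{other} endpoint of $e'$, hence the two angles of $e'$ at $v$ are the two distinct non-$c(e')$ colors; in particular the angle of $T$ at $v$ is one of these, so $\gamma \neq c(e')$ but $\gamma$ could still equal $c(e)$ — the key point being that the angle of $T$ at $v$ receives a well-defined color from $T$, and this must simultaneously satisfy the constraint coming from $e$ (it equals $c(e)$) and be consistent with $T$'s orientation.

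The main work is then to derive a contradiction from the interaction of the two edge-colorings around $T$. I would use Lemma~\ref{lemma:same_orient}: all triangles have a common orientation, say counterclockwise, so within $T$ the three angles read $(i, i+1, i+2)$ cyclically. Knowing the angle at $v$ is $c(e)$, the oriented-edge constraint from $e'$ forces the angle at $v$ to differ from $c(e')$ in a way that, combined with the cyclic order in $T$ and the orientation of $e'$ at its far endpoint, pins down $c(e')$ in terms of $c(e)$ and the position of $v$ in $T$. Comparing this with the independent determination of $c(e)$ from $e$'s far endpoint, using again that the triangle on the far side of $e$ has the same orientation, I expect a parity/cyclic clash: the two edges would force the shared angle $\gamma$ to be two different colors, contradicting Lemma~\ref{lemma:angle_color} (which says the triangle's three angle colors are all distinct and hence each angle has exactly one color). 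Concretely this is the same style of case chase as in the proof of Lemma~\ref{lemma:same_orient}, just run ``around a vertex'' rather than ``across an edge''.

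The hard part will be bookkeeping the orientation conventions consistently — tracking, for each chordal edge at $v$, whether the doubled color sits at $v$ or at the far endpoint, and translating that into which of the two triangles flanking the edge carries which color at $v$. I would manage this by fixing the common orientation from Lemma~\ref{lemma:same_orient} once and for all, then reading off the angle at $v$ of each incident triangle as $v$ ranges over the fan: an incoming chordal edge and an outgoing chordal edge force \emph{incompatible} colors on the angle of their common triangle at $v$, and since that angle has a unique color (Lemma~\ref{lemma:angle_color}), no vertex can have both an incoming and an outgoing chordal edge. Hence every vertex is a sink or a source with respect to the chordal-edge orientation. A vertex incident to no chordal edge is vacuously both.
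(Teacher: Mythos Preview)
Your approach has a genuine gap: the angle-coloring constraints from Lemmas~\ref{lemma:angle_color}, \ref{lemma:edge_color} and \ref{lemma:same_orient} are \emph{not} enough to rule out a vertex with both an incoming and an outgoing chordal edge. Concretely, take three triangles $T_1=\{v,a,c\}$, $T_2=\{v,a,b\}$, $T_3=\{v,b,d\}$ sharing the vertex $v$, with chordal edges $e_i=\{v,a\}$ oriented toward $v$ and $e_o=\{v,b\}$ oriented toward $b$. Set $c(e_i)=1$, $c(e_o)=2$, and color the angles: in $T_2$ put $(v,a,b)\mapsto(1,3,2)$; in $T_1$ put $(v,a,c)\mapsto(1,2,3)$; in $T_3$ put $(v,b,d)\mapsto(3,2,1)$. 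All three triangles carry the colors $1,2,3$ in the same rotational sense, the four angles around each chordal edge obey Lemma~\ref{lemma:edge_color}, and every triangle satisfies Lemma~\ref{lemma:angle_color}. So your asserted ``incompatible colors on the angle of their common triangle at $v$'' does not materialise: the incoming edge forces that angle to equal $c(e_i)$, the outgoing edge only forces it to be \emph{different from} $c(e_o)$, and these are compatible whenever $c(e_i)\neq c(e_o)$.

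What is missing is an appeal to the realizer itself, not just to the induced angle colors. The paper supplies this via Lemma~\ref{lemma:chordal_edge}: a chordal edge $e$ satisfies $e>F_\infty$ in $L_{c(e)}$. In the situation above one checks (using Lemmas~\ref{lemma:edge_color} and \ref{lemma:angle_color}) that the angle at $d$ in $T_3$ has color $c(e_i)$; this angle corresponds to the critical pair $(d,T_2)$. Since $e_i\in T_2$ and $d\in F_\infty$, the pairs $(e_i,F_\infty)$ and $(d,T_2)$ form an alternating cycle, and Lemma~\ref{lemma:chordal_edge} makes it monochromatic in color $c(e_i)$. That is the contradiction; your plan needs this extra lemma (or an equivalent use of the linear extensions) to close.
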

\begin{proof}
  Suppose that there is a triangle $F$ and a vertex $v$ such that $F$
  has two chordal edges $e_i$ and $e_o$ meeting at $v$, such that $e_i$
  is incoming and $e_o$ is outgoing at
  $v$. See \reffig{fig:in-out_edge}. The colors of the angles $\alpha$,
  $\beta$ and $\gamma$ must be pairwise different by
  \reflemma{lemma:edge_color}. Hence, $\alpha$ and $\delta$ must have
  the same color (\reflemma{lemma:angle_color}). But $\alpha$ has the
  same color as $e_i$. From \reflemma{lemma:chordal_edge} it follows
  that the alternating cycle $(e_i, F_{\infty})$,$(w,F)$ is
  monochromatic -- contradiction.
\end{proof}

\def\sr{{\scriptstyle\color[rgb]{1,0,0}r}}
\def\sg{{\scriptstyle\color[rgb]{0,.5,.5}g}}
\def\sb{{\scriptstyle\color[rgb]{0,0,1}b}}
   \calc_figscale{100}
    \begin{figure}[htb]
    \centerline{\input{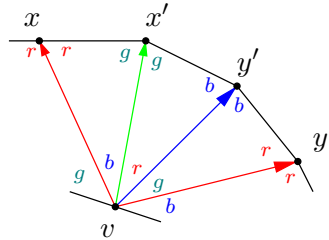}}
    \caption{No two outgoing edges can
 have the same color.\label{fig:2_outgoing}}
    \end{figure}
    

\begin{prop}
\label{prop:2_out}
No two outgoing edges from a vertex have the same color.
\end{prop}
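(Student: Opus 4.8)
The plan is to argue by contradiction, following the same pattern used in Lemmas~\ref{lemma:chordal_edge} and~\ref{lemma:sink_or_source}: assume two outgoing edges of the same color and produce a monochromatic alternating cycle. By \reflemma{lemma:sink_or_source} a vertex $v$ incident to two outgoing edges is a source, so suppose $v$ has outgoing chordal edges $e_1=\{v,a\}$ and $e_2=\{v,b\}$, both of color $c$. Since $M$ is path-like, the interior triangles incident to $v$ form a fan, and I would first reduce to the case where $e_1$ and $e_2$ are \emph{consecutive} in this fan: if two outgoing edges of color $c$ exist at all, then walking along the fan there must be two consecutive ones of color $c$ (an outgoing edge of a different color, or a cycle edge, sitting between them can be handled by replacing $e_2$ with the next outgoing $c$-edge after $e_1$). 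So \twlog $e_1$ and $e_2$ bound a common interior triangle $F$ with third vertex, say, reached along the fan; call the three angles of $F$ at $v$, $a$, $b$ as in \reffig{fig:2_outgoing}.

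Next I would extract the color information. Both $e_1$ and $e_2$ are oriented towards their \emph{other} endpoints (away from $v$), and by the encoding preceding \reflemma{lemma:orientation}-style discussion, the color that appears twice around $e_i$ appears at the head of $e_i$. So around $e_1$ the two angles at $a$ are both color $c$, and around $e_2$ the two angles at $b$ are both color $c$. In particular the angle of $F$ at $a$ is $c$ and the angle of $F$ at $b$ is $c$ — but these are two distinct angles of the triangle $F$ with the same color, contradicting \reflemma{lemma:angle_color}. This already finishes the case where $e_1,e_2$ are the two chordal sides of a common triangle.

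The remaining case is when $e_1$ and $e_2$ are consecutive outgoing edges at $v$ in the fan but are separated by one interior triangle $T$ that is \emph{not} bounded by a chordal edge on the side away from $v$ — i.e., the fan structure forces a short chain of triangles between them. Here I would use \reflemma{lemma:chordal_edge}: since $e_1$ has color $c$, $e_1>F_\infty$ in $L_c$, and since $e_1$ is outgoing at $v$, the head of $e_1$ forces (via the double-$c$ angle there, together with the critical-pair order in $L_c$) that $v$ lies above the relevant face/edge on the $e_1$-side in $L_c$; symmetrically for $e_2$. Combining with \reflemma{lemma:chordal_edge} applied to $e_2$ gives that some critical pair living strictly between $e_1$ and $e_2$ (the pair associated to the angle of $T$ at $v$, or the pair $(e_1,F_\infty)$ together with $(\cdot,T)$) forms a two-element monochromatic alternating cycle in $L_c$, exactly as in the proof of \reflemma{lemma:sink_or_source}. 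I expect the main obstacle to be the bookkeeping in this reduction-to-consecutive step: one must be careful that ``walking along the fan'' is well defined (it is, because the interior dual is a path, so the triangles at $v$ really do form a linearly ordered fan), and that an intervening cycle edge or differently-colored outgoing edge genuinely lets us shrink to the triangle case rather than breaking the argument. Once the configuration is pinned down to a single triangle with two $c$-colored angles, the contradiction with \reflemma{lemma:angle_color} (or with \reflemma{lemma:chordal_edge} in the degenerate sub-case) is immediate.
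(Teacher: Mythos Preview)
Your Case 1 argument is correct and in fact proves something stronger than you realize: if $e_1=\{v,a\}$ and $e_2=\{v,b\}$ are \emph{consecutive} outgoing chordal edges at $v$, then they bound a common triangle $\{v,a,b\}$, the angle of that triangle at $a$ has the color of $e_1$ and the angle at $b$ has the color of $e_2$, and by \reflemma{lemma:angle_color} these are distinct. So consecutive outgoing edges \emph{always} have different colors. (Combined with \reflemma{lemma:same_orient}, this even shows that the colors of the outgoing edges at $v$ cycle through $1,2,3$ in a fixed direction as you go around the fan --- this is exactly what the paper observes.)

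This is precisely why your reduction step fails. You claim that if two $c$-colored outgoing edges exist, you can pass to a pair of \emph{consecutive} $c$-colored ones; but your own Case~1 shows no such pair can exist. If there are two outgoing edges of the same color, they are necessarily separated by at least two other outgoing edges (of the other two colors), so the minimal configuration already has four outgoing edges $vx,\,vx',\,vy',\,vy$ with colors $c,\,c',\,c'',\,c$ in that cyclic order. Your ``remaining case'' (``consecutive \ldots\ but separated by one triangle'') is not a well-defined configuration in a maximal path-like map, and the vague alternating-cycle sketch there does not pin down which pair is monochromatic.

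The paper's argument handles exactly this four-edge configuration. Taking $c=\text{red}$, the orientation of $vx$ and $vy$ forces $v>x$ and $v>y$ in both green and blue (from the non-red angles at $v$ in the extreme triangles). Hence any interior face containing $v$ but not $x$ must go below $x$ in red, and likewise for $y$. In particular $x>\{v,y',y\}$ and $y>\{v,x,x'\}$ in red, and since $x\in\{v,x,x'\}$ and $y\in\{v,y',y\}$ this is a monochromatic alternating cycle. The missing idea in your attempt is this ``$v>x$ in the two non-$c$ colors'' observation, which converts the edge-orientation data into vertex--face inequalities across the whole fan rather than just in one triangle.
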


\Proof
Suppose that $v$ has two outgoing edges of the same color
and that all triangles are clockwise.
From the coloring of angles it follows that
edges sharing an angle have different colors.
Even more, the colors of the outgoing edges at $v$ in clockwise order
cycle through 1,2,3. Hence, we find a sequence $x,x',y',y$ of vertices, such that
$vx$ and $vy$ have the same color, see \reffig{fig:2_outgoing}.
Now, $v$ is above $x$ in colors green and blue, so any face incomparable to $x$
which contains $v$ has to be below $x$ in red. The same is true for
$y$. In particular $x> \{v,y',y\}$ and $y > \{v,x,x'\}$ in red. This is
a monochromatic red alternating cycle -- contradiction.
\qed

\begin{cor}
No vertex has four or more outgoing chordal edges. 
\end{cor}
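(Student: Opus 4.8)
The statement to prove is the corollary: no vertex has four or more outgoing chordal edges.

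The plan is immediate. By \refprop{prop:2_out}, no two outgoing edges from a vertex share a color, and there are only three colors (red, green, blue) available in the angle coloring. Hence a vertex can have at most three outgoing chordal edges, one of each color; in particular it cannot have four or more. I would write this as a one-line deduction: if a vertex $v$ had four outgoing chordal edges, then by pigeonhole two of them would receive the same color, contradicting \refprop{prop:2_out}.

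There is essentially no obstacle here — the corollary is a pure pigeonhole consequence of the preceding proposition, and the only thing to be careful about is that the three-color palette is genuinely the ambient set of colors (which it is, since the angle coloring is a 3-coloring coming from a realizer $L_1,L_2,L_3$, and a chordal edge's color is defined in \reflemma{lemma:edge_color} as one of these three). So the proof is just: apply the three-coloring bound together with \refprop{prop:2_out}.

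\begin{proof}
The color of a chordal edge is one of the three colors $1,2,3$.
By \refprop{prop:2_out}, the outgoing chordal edges at a fixed vertex
receive pairwise distinct colors. Hence there can be at most three of
them, so in particular no vertex has four or more outgoing chordal edges.
\end{proof}
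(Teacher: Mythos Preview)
Your proof is correct and matches the paper's approach exactly: the paper states this as an immediate corollary of \refprop{prop:2_out} with no separate proof, and your pigeonhole argument is precisely the intended one-line deduction.
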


We say that the colors of the chordal edges bounding a face are \emph{the colors
of the face}. A face with two colors is called \emph{bicolored}.

\begin{prop}
\label{prop:bicolor}
No two bicolored faces have the same colors. 
\end{prop}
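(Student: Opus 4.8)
The plan is to argue by contradiction: suppose two distinct bicolored faces $F$ and $F'$ use the same pair of colors, say $\{1,2\}$ (so neither of them is incident to a chordal edge of color $3$). First I would record what the color data of a bicolored face means in terms of the orientation of its two chordal edges. Since all interior triangles have the same orientation by Lemma~\ref{lemma:same_orient}, and since the two chordal edges of $F$ carry colors $1$ and $2$ and are each oriented toward the endpoint where their color appears twice, the combinatorics of a triangle forces a definite local picture: at the vertex of $F$ that is the common endpoint of its two chordal edges, one edge is incoming and one is outgoing, which by Lemma~\ref{lemma:sink_or_source} is impossible \emph{unless} the two chordal edges of $F$ meet only at the two ``outer'' vertices, i.e.\ the third side of $F$ is a cycle edge or the configuration degenerates. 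So the first key step is a careful case analysis of how two chordal edges of fixed colors $1,2$ can sit in a single triangle, using Lemmas~\ref{lemma:angle_color}, \ref{lemma:edge_color}, \ref{lemma:same_orient}, and \ref{lemma:sink_or_source}; this pins down the orientation of each of $F$'s two chordal edges, and likewise for $F'$.

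**The main argument.**

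Once the local orientation at $F$ and $F'$ is fixed, I would exploit Lemma~\ref{lemma:chordal_edge}: a chordal edge of color $c$ lies above $F_\infty$ in $L_c$, hence below $F_\infty$ in the other two linear extensions. Because $F$ and $F'$ each have colors exactly $\{1,2\}$, both of $F$'s chordal edges lie below $F_\infty$ in $L_3$, and the same for $F'$. I would then locate, inside $F$, a vertex $w$ that is below $F_\infty$ and below the relevant faces in two of the three extensions, exactly as in the proofs of Lemmas~\ref{lemma:sink_or_source} and \ref{lemma:chordal_edge}, and do the symmetric thing for $F'$. The goal is to produce a subposet — most naturally a $\mathbf{2+2}$ on $\{w, F', w', F\}$ where $w'$ is the analogous vertex in $F'$ — whose two critical pairs cannot be reversed in $L_3$ (they sit on the wrong side of $F_\infty$ there) and cannot both be reversed in $L_1$ or both in $L_2$ (because $\dim(\mathbf{2+2})=2$ would be violated), giving the contradiction. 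This is the same template used throughout Section~\ref{sec:lower_bound} and in the path-like lemmas, so the contradiction mechanism is routine once the elements are identified.

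**Where the difficulty lies.**

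The hard part will be the bookkeeping in the first step: a bicolored face need not have its two chordal edges sharing a vertex in the ``obvious'' way, and since $M$ is path-like its interior dual is a path, so a given triangle has at most two chordal sides and the third side may be a cycle edge or a chordal edge shared with a neighbor in the dual path. I need to make sure the argument covers the case where $F$ and $F'$ are adjacent in the interior dual (sharing a chordal edge) and the case where they are far apart. When they share an edge $e$, that shared edge has a single color, say $1$, and then $F$ and $F'$ each have a second chordal edge of color $2$ oriented in a way constrained by Lemma~\ref{lemma:same_orient}; a short direct check of the two triangles around $e$, in the spirit of the proof of Lemma~\ref{lemma:same_orient}, should already yield a monochromatic alternating cycle. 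When they are not adjacent, the $\mathbf{2+2}$ argument above applies cleanly. So the proof splits into the ``adjacent'' sub-case (handled by a local angle-color contradiction) and the ``non-adjacent'' sub-case (handled by the $\mathbf{2+2}$ argument), and I expect the adjacent sub-case, together with getting the orientations exactly right, to be the only place where care is genuinely needed.
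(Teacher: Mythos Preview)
Your proposal is far more complicated than needed and contains a genuine logical error in the contradiction step.

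The paper's proof is four lines. Since $F$ contains a chordal edge of color~$1$ and one of color~$2$, and each chordal edge lies above $F_\infty$ in the extension of its own color (Lemma~\ref{lemma:chordal_edge}), one gets $F > F_\infty$ in \emph{both} $L_1$ and $L_2$; the same holds for $F'$. Every vertex lies in $F_\infty$, hence below $F_\infty$ in every extension, so every vertex lies below $F$ and below $F'$ in $L_1$ and in $L_2$. Thus any critical pair $(v,F)$ or $(v,F')$ can be reversed only in $L_3$. Pick $x\in F\setminus F'$ and $y\in F'\setminus F$; then $(x,F'),(y,F)$ is an alternating cycle forced to be monochromatic in color~$3$ --- contradiction.

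Your argument gets the roles of $L_3$ and $\{L_1,L_2\}$ backwards. You assert that the two critical pairs ``cannot be reversed in $L_3$ (they sit on the wrong side of $F_\infty$ there)'', but that justification fails: in $L_3$ one has $F,F' < F_\infty$ and also $w,w' < F_\infty$, so the relative order of $w$ and $F'$ in $L_3$ is simply not constrained by $F_\infty$. Worse, even if your claim were true, your conclusion does not follow: knowing that neither $L_1$ nor $L_2$ can reverse \emph{both} pairs of a $\mathbf{2+2}$ still permits $L_1$ to reverse one pair and $L_2$ the other, which is no contradiction. The correct direction is that $L_1$ and $L_2$ reverse \emph{neither} pair (because $F,F' > F_\infty$ there), forcing both into $L_3$.

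All of the orientation bookkeeping, the sink/source discussion, and the adjacent/non-adjacent case split are unnecessary: the argument is uniform and uses only Lemma~\ref{lemma:chordal_edge} together with the fact that any two distinct interior triangles $F,F'$ satisfy $F\setminus F'\neq\emptyset$.
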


\begin{proof}
  Suppose $F$ and $F'$ are two such faces. Suppose the two colors are
  red and green. Then $F_{\infty}$ is below $F$ and $F'$ in red and
  green (\reflemma{lemma:chordal_edge}). Therefore, $F$ and $F'$ can
  be below any vertex only in blue. Let $x$ be vertex in $F\setminus F'$
  and $y$ be a vertex in $F'\setminus F$. Then $(x,F')$, $(y,F)$ is a
  monochromatic blue alternating cycle -- contradiction.
\end{proof}

We say that an oriented coloring satisfying
\reflemma{lemma:same_orient}, \refprop{prop:2_out} and
\refprop{prop:bicolor} is \emph{permissible}. The map of
\reffig{fig:canonical_map} is shown with a permissible coloring.
We call it the \emph{canonical map}. The vertices (edges) in the top of the figure
are called $q$-vertices (edges) and the ones in the bottom of the
figure are called $p$-vertices. Note that faces $F_2$, $F_4$ and $F_6$
are bicolored.

   \calc_figscale{140}
    \begin{figure}[htb]
    \centerline{\input{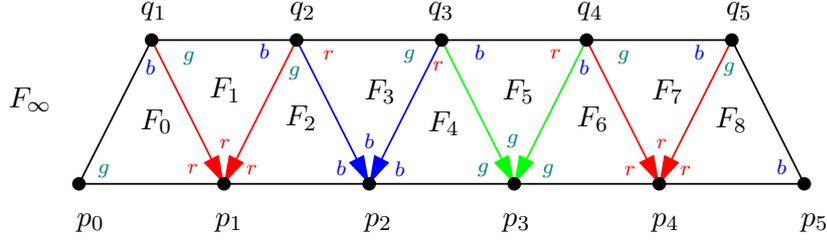}}
    \caption{The canonical map.\label{fig:canonical_map}}
    \end{figure}
    

\begin{lemma}
\label{lemma:canonical_map_construct}
Any maximal path-like map with a permissible coloring of
the chordal edges can be constructed from the canonical map by a sequence of
the following three operations:
\begin{itemize}
\item[(i)] Contracting one of the $q$-edges $\{q_2,q_3\}$ or $\{q_3,q_4\}$.

\item[(ii)] Subdividing a $q$-edge. Chordal edges between the new
  vertices and the $p$-vertex in the triangle are inserted with the
  same color and orientation as the old edges.

\item[(iii)] Deleting all the vertices, edges and faces on one side of
  a chordal edge.

\end{itemize}
\end{lemma}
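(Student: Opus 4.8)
The plan is to read off, from the permissibility constraints, the global structure of the chordal-edge spine of $M$; to show that this structure is determined by just two pieces of data (a small combinatorial ``shape'' and some lengths); and then to realise every possible such structure from the canonical map by applying operations~(i)--(iii).

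First I would fix, using that the three colours are interchangeable and Lemma~\ref{lemma:same_orient}, that all interior triangles are clockwise, and list the chordal edges $e_1,\dots,e_{m-1}$ in the order of the interior-dual path, each carrying its permissible colour and its orientation toward its sink. Two consecutive chordal edges $e_i,e_{i+1}$ are two of the three sides of the middle triangle $T_{i+1}$, so they meet in a unique vertex $v$ of $T_{i+1}$, which by Lemma~\ref{lemma:sink_or_source} is a sink or a source; this determines which endpoints of $e_i,e_{i+1}$ are identified. If $v$ is their common sink, then the single angle of $T_{i+1}$ at $v$ is at once a monochromatic angle of $e_i$ and of $e_{i+1}$, so Lemma~\ref{lemma:edge_color} forces $e_i$ and $e_{i+1}$ to receive the same colour. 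If $v$ is their common source, then $e_i$ and $e_{i+1}$ are both outgoing at $v$, hence distinctly coloured by Proposition~\ref{prop:2_out}, and then $T_{i+1}$ is bicoloured with exactly these two colours. Consequently the spine is a chain of maximal monochromatic \emph{fans} (all triangles around one sink, all chordal edges of a single colour) joined at \emph{turns} at source vertices; every turn is a bicoloured face and every bicoloured face is a turn, so by Proposition~\ref{prop:bicolor} there are at most three turns. A turn joining a fan of colour $c$ to a fan of colour $c'$ realises the pair $\{c,c'\}$, and requiring the at most three realised pairs to be distinct forces, up to relabelling the colours: with three turns the four consecutive fan colours are pairwise distinct apart from the first and last, which coincide; with two turns the three fan colours are the three distinct colours; with at most one turn there is at most one bicoloured face. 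Hence the combinatorial type of a permissibly coloured maximal path-like map is pinned down by the number of turns together with the lengths of its fans (the end-triangles being simply the extremal triangles of the extremal fans).

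It then remains to realise each such type from the canonical map. The canonical map is exactly the three-turn type with all four fans of minimal length, arranged so that operation~(ii) is precisely the extension of a fan by one triangle (subdividing a side of the Hamilton cycle opposite a fan apex and attaching the new vertex to that apex with the fan's colour and orientation). Iterating~(ii) therefore produces fans of arbitrary length, so it suffices to obtain from the canonical map the minimal-length representative of each type with $0,1,2,3$ turns. Three turns is the canonical map itself. For fewer turns, apply operation~(iii) at a chordal edge that cuts off one of the three bicoloured faces together with everything beyond it toward the near end; peeling off such an end-block removes one turn while, by Proposition~\ref{prop:bicolor}, keeping the surviving pairs distinct. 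When the turn to be removed is the \emph{middle} bicoloured face $F_4$ rather than an end one, first collapse it into an adjacent fan by operation~(i) on $\{q_2,q_3\}$ or $\{q_3,q_4\}$, and then truncate. A short case check over the number of surviving turns and their position --- including the degenerate cases of a single triangle and of a single fan with no chords --- shows that a suitable composition of~(i) and~(iii), followed by applications of~(ii), yields exactly $M$.

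The step I expect to be the main obstacle is this last reconstruction: one must pin down the canonical map precisely and verify that operations~(iii) and~(i) --- with~(i) permitted only on the two designated $q$-edges --- together reach every admissible turn count and turn placement, that the small and degenerate configurations are covered, and that no type ever requires shortening a fan except through those specific contractions. Everything preceding it is essentially forced by Lemmas~\ref{lemma:angle_color}--\ref{lemma:sink_or_source} and Propositions~\ref{prop:2_out}--\ref{prop:bicolor}; the remaining work is bookkeeping, most cleanly organised either as an induction on the number of interior triangles or as an explicit case analysis on the number of turns.
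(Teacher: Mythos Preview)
Your overall strategy matches the paper's: deduce from \refprop{prop:bicolor} and \refprop{prop:2_out} that the structure of a permissibly coloured maximal path-like map is tightly bounded, observe that the canonical map is maximal with respect to this structure, and then trim. Your fan/turn language is just a more verbose version of the paper's count ``$n$ sinks $\Leftrightarrow$ $n-1$ bicoloured faces, hence at most $4$ sinks''; the paper then adds only one further datum, namely whether some source has out-degree~$3$.

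Where your proposal goes wrong is in the role you assign to operation~(i). In the canonical map each of the four fans has \emph{two} chordal edges, not one, so the fans are not of minimal length. Contracting $\{q_2,q_3\}$ (or $\{q_3,q_4\}$) does not ``collapse the middle turn $F_4$ into an adjacent fan'': it removes the monochromatic face $F_3$ (resp.\ $F_5$), merges its two parallel green chordal edges into one, and leaves all three bicoloured faces $F_2,F_4,F_6$ intact. The effect is to shrink one of the two middle fans to a single chordal edge, which is precisely what produces a source of out-degree~$3$ --- the case your turn-count classification never explicitly isolates. Reducing the number of turns is always achievable with (iii) alone, by cutting off an end; operation~(i) is needed exactly and only for target maps $M$ that contain a source of out-degree~$3$ (equivalently, a length-one fan sandwiched between two bicoloured faces). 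You correctly flagged the reconstruction step as the main obstacle, and indeed once you fix this single misreading of~(i), your case analysis goes through and becomes the paper's argument: apply~(i) at most once to match the out-degree-$3$ situation, use~(iii) to discard any surplus sinks or sources, and finally use~(ii) to pad the fans out to the required lengths.
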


\begin{proof}
Let $M$ be a maximal path-like map with a permissible coloring of the
chordal edges. If $M$ has $n$ sinks, then there are $n-1$ bicolored
faces in $M$. From \refprop{prop:bicolor} and
\reflemma{lemma:same_orient} it follows that the canonical map has the
maximum possible number of sinks.

Again from  \refprop{prop:bicolor} it follows that $M$ has at most one vertex
with outdegree 3. We can contract $\{q_2,q_3\}$ or $\{q_3,q_4\}$ in the
canonical map to get such a vertex. Note that contracting  $\{q_2,q_3\}$ and
$\{q_3,q_4\}$ would yield a map with a vertex of out-degree four, i.e., 
a map that has no permissible coloring.

Any sinks or 
sources in the canonical map
that are not in $M$ can be removed using operation
(iii). Hence, what remains is possibly to add some sources of degree
one to the map $M'$ we have constructed. But this is easy, since the
new inserted chordal edges must have the same colors as the faces they 
split, i.e., we can just subdivide the $q$-edges as in~(ii).
\end{proof}

\begin{lemma}
\label{lemma:canonical_map_dim}
Let $M$ be the canonical map. Then $\dim(\vef{M})=3$.
\end{lemma}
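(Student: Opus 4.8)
The plan is to produce an explicit $3$-realizer $\{L_1,L_2,L_3\}$ of $\vef{M}$ for the canonical map $M$, using the permissible coloring shown in \reffig{fig:canonical_map} as a guide; since we have already exhibited a vertex of degree $3$ in the text, the bound $\dim(\vef{M})\geq 3$ is automatic, so the whole content is the construction of the realizer. First I would fix notation for the canonical map: the $p$-vertices on the bottom, the $q$-vertices on the top, the $q$-edges along the top path, the chordal edges with their colors and orientations, the interior triangular faces $F_1,\ldots,F_6$ (with $F_2,F_4,F_6$ bicolored), and the outer face $F_\infty$. The guiding principle, already foreshadowed by \reflemma{lemma:chordal_edge}, \reflemma{lemma:sink_or_source}, \refprop{prop:2_out} and \refprop{prop:bicolor}, is that in $L_c$ the outer face $F_\infty$ should sit below exactly the chordal edges and faces of color $c$ and above everything else low in the order; so each $L_c$ is designed to reverse precisely the critical pairs whose assigned color is $c$.

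The key steps, in order, are as follows. (1) Enumerate the critical pairs of $\vef{M}$. By the height-$2$ structure these are the incomparable min--max pairs together with pairs $(e,G)$ where $e$ is an edge and $G$ a face with $e\not\subseteq G$ but every vertex of $e$ lies in $G$; for the canonical map this is a short finite list, organized by the angle coloring (each interior angle gives one critical pair $(u,F_u)$, and one checks there are no others beyond the vertex--$F_\infty$ and edge--$F_\infty$ pairs). (2) For each color $c\in\{1,2,3\}$, describe $L_c$ concretely as a linear extension: put the vertices whose critical pair is colored $c$ at the very bottom, then the remaining vertices, then $F_\infty$, then the chordal edges and faces of color $c$, then the rest; one has freedom in the internal ordering and I would pin it down so that the two endpoints of a chordal edge $e$ of color $c$ are ordered with the tail of $e$ below $e$ below the head of $e$, matching the reading used in the proof of \reflemma{lemma:edge_color}. (3) Verify $L_c$ is a genuine linear extension of $\vef{M}$ — i.e. respects all incidences $v<e<F$ — which is where \reflemma{lemma:sink_or_source} and \reflemma{lemma:chordal_edge} do the work: a vertex that is a source for the color-$c$ edges can be pushed down without violating incidence with its other (color $\neq c$) edges, and $F_\infty$ can be raised above the color-$c$ objects because it is genuinely incomparable to all interior vertices and edges. (4) Check that every critical pair is reversed in the $L_c$ for its assigned color: the vertex--face angle pairs by construction, the edge--$F_\infty$ pairs by \reflemma{lemma:chordal_edge}, the face--$F_\infty$ pairs (for the bicolored faces the relevant reversal happens in the missing third color, by \refprop{prop:bicolor}), and the vertex--$F_\infty$ pairs by the sink/source dichotomy. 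Concluding, $\{L_1,L_2,L_3\}$ is a realizer, so $\dim(\vef{M})\leq 3$, and combined with the degree-$3$ lower bound, $\dim(\vef{M})=3$.

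The main obstacle I anticipate is step (3) together with the ``no conflict'' part of step (4): the three constraints — $F_\infty$ below the color-$c$ chordal edges, those chordal edges above/below their endpoints consistently, and all of this still a linear extension — must be simultaneously satisfiable in each $L_c$, and a priori two of them could clash at a vertex incident to chordal edges of two different colors with the wrong orientation pattern. This is exactly what the accumulated lemmas rule out: \reflemma{lemma:sink_or_source} guarantees a vertex is never forced both up (as head of a color-$c$ edge) and down (as tail of a color-$c$ edge) in the same $L_c$, \reflemma{lemma:chordal_edge} guarantees the $F_\infty$ placement is consistent across the two colors it must lie above, and \refprop{prop:2_out}/\refprop{prop:bicolor} eliminate the configurations that would force two incompatible demands in one linear order. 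So the proof is really a careful bookkeeping exercise verifying that the permissible coloring of the specific canonical map lets all these local prescriptions glue into three global linear extensions; I would present it by writing out $L_1,L_2,L_3$ explicitly as chains and then ticking off the critical pairs one family at a time.
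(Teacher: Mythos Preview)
Your proposal has a genuine gap in step (1), and it propagates through the rest of the argument. You list the critical pairs as: the angle pairs $(u,F_u)$, the pairs $(e,F_\infty)$ for chordal edges $e$, pairs $(v,F_\infty)$, and pairs $(F,F_\infty)$. But in a strongly outerplanar map every vertex lies on $F_\infty$, so there are no $(v,F_\infty)$ critical pairs at all; and an interior face $F$ always contains a chordal edge not in $F_\infty$, so $(F,F_\infty)$ is not critical either. What you are \emph{missing} is the large family of pairs $(v,F)$ where $F$ is an interior face not containing $v$ and not adjacent to any triangle containing $v$. These are all incomparable min--max pairs, hence critical, and the angle coloring says nothing about them directly. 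Your block structure in step (2) --- vertices, then $F_\infty$, then color-$c$ objects, then the rest --- never specifies how a vertex $v$ is ordered relative to a far-away interior face $F$, and step (4) has no clause covering such pairs.

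The paper's proof is organized around exactly this difficulty. It does not try to read the realizer off the angle coloring alone; instead it exploits the vertical symmetry of the canonical map to split the vertex set into a left part $V_l$, a right part $V_r$, and $\{q_3\}$. The green order puts $V_r>q_3>V_l$, the blue order does the reverse, and the red order is built from two interleaved chains on $V_l\cup\{q_3\}$ and $V_r\cup\{q_3\}$. Edges and faces are then inserted ``as low as possible'' (with chordal edges pushed above $F_\infty$ only in their own color). The non-local $(v,F)$ pairs are handled by this global left/right structure --- e.g.\ $v\in V_l$ and $F\subset V_r\cup\{q_3\}$ forces the pair to be reversed in blue --- and only a handful of same-side pairs remain to be checked individually. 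So the permissible coloring is necessary scaffolding, but it is not sufficient to write down the realizer; you need an additional global organizing principle on the vertices, and the left/right decomposition is what supplies it.
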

\begin{proof}
Consider the vertical symmetry through $q_3$ and the edge $\{p_2,p_3\}$.
This partitions the vertex set into a left part $V_l=\{p_0,p_1,p_2,q_1,q_2\}$
a right part $V_r=\{p_5,p_4,p_3,q_5,q_4\}$ and
$\{q_3\}$.

We construct three partial orders, one for each color. We start with
the order of the vertices. In the green order we want $V_r > q_3 >
V_l$.  The ordering of the vertices in $V_l$ and $V_r$ is such that it
allows to reverse all critical pairs corresponding to green angles, i.e., on
$V_l:$ $p_0 > q_1 > q_2 > \{p_2,p_1\}$ and on $V_r:$ $p_3 > q_4 > q_5 >
\{p_4,p_5\}$.  We add the relations $p_2>p_1$ and $p_5>p_4$ so that
the order on $V_l$ and $V_r$ conforms with the clockwise ordering
around the outer face with start in $p_0$ and $p_3$, respectively.

The blue partial order is created symmetrically.  I.e., it is obtained
from the green order with the mappings $p_i \to p_{5-i}$ and $q_j \to
q_{6-j}$. Note that the order on $V_l$ and $V_r$ conforms with the
counterclockwise ordering around the outer face with start in $p_2$
and $p_5$, respectively.

For the red partial order, we construct two linear orders on $V_l \cup
\{q_3\}$ and $V_r \cup \{q_3\}$. These linear orders gives us a partial
order on $V_l \cup V_r \cup \{q_3 \}$. In the
linear order on $V_l \cup \{q_3\}$, the vertices come in the clockwise
ordering around the outer face boundary with $p_1$ as maximal
element, i.e, $p_1>p_0>q_1>q_2>q_3>p_2$. The right part is done symmetrically,
$p_4>p_5>q_5>q_4>q_3>p_3$.

We now have three partial orders on the vertices. We extend these to
partial orders on the vertices, edges and faces in three steps.
First, we insert the Hamilton cycle edges and the outer face as low
as possible in each of the three orders. Then the chordal edges are put above
the outer face in their color, and as low as possible in the other two
colors. Finally, the interior faces are inserted as low as possible.

\Claim R.
Every critical pair is reversed in one of the partial orders. 
\smallskip

The lemma clearly follows from this claim; any three linear extensions
of the partial orders constructed will then form a realizer.

There are three types of critical pairs: edge-face pairs, vertex-edge
pairs and vertex-face pairs. All edge-face pairs are of the form
(chordal edge, outer face), so they are reversed in the color of the edge. 

Consider a vertex-edge critical pair $(v,e)$. For the pair to be critical 
$v$ and $e$ must belong to a triangle and $e$ has to be an edge of the Hamilton cycle.
Such a critical pair corresponds to a colored angle at $v$. 
Since the order of each color reverses all critical pairs corresponding to this
color each critical pair of this class is reversed.

It remains to prove that all vertex-face pairs $(v,F)$ are
reversed. Note that $F$ is an interior face.
If $v \in V_l$ and $F\subset V_r\cup\{q_3\}$, then $(v,F)$ is
reversed in blue. Similarly, if $v \in V_r$ and $F\subset V_l\cup\{q_3\}$, 
then $(v,F)$ is reversed in green. All the
vertices that are incomparable to $F_4$ are above $F_4$ in red, and
$q_3$ is above all incomparable faces in green or blue. Hence,
we only have to show that $(v,F)$ is reversed when $F$ and $v$ are
either both left or both right.

Suppose $v \in V_l$. 
The critical pairs $(v,F_0)$ and $v=p_2,q_2$ are reversed in blue.
The pair $(p_0,F_1)$ is reversed in green and $(p_2,F_1)$ in blue.
The two pairs involving $F_2$ are reversed in green and all three pairs 
with $F_3$ in red. The cases where $v \in V_r$ are symmetric.
\end{proof}

\begin{prop}
\label{prop:path_like_dim}
Let $M$ be a maximal path-like map. Then
$\dim(\vef{M}) = 3$ if and only if there is a permissible coloring of
the chordal edges. 
\end{prop}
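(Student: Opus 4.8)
The plan is to prove the two implications separately. For the forward direction, suppose $\dim(\vef{M}) = 3$ and fix a realizer $L_1, L_2, L_3$. As already set up in the paragraph preceding \reflemma{lemma:angle_color}, this realizer induces an angle $3$-coloring, and by \reflemma{lemma:edge_color} this coloring can be encoded as an oriented coloring of the chordal edges. It then remains only to observe that this oriented coloring is permissible, but that is exactly the content of the lemmas and propositions already proved: \reflemma{lemma:same_orient} gives that all interior triangles have the same orientation, \refprop{prop:2_out} gives that no two outgoing edges from a vertex share a color, and \refprop{prop:bicolor} gives that no two bicolored faces have the same colors. So the forward direction is essentially a bookkeeping statement collecting results already in hand.

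For the reverse direction, suppose $M$ is a maximal path-like map admitting a permissible coloring of its chordal edges. I would use \reflemma{lemma:canonical_map_construct}: $M$ can be obtained from the canonical map by a sequence of the three operations (i) contracting a $q$-edge, (ii) subdividing a $q$-edge, and (iii) deleting everything on one side of a chordal edge. By \reflemma{lemma:canonical_map_dim} the canonical map has $\dim(\vef{}) = 3$, so it suffices to show that each of the three operations preserves the bound $\dim(\vef{}) \le 3$, given that we start from a map with a permissible coloring and stay within maximal path-like maps with permissible colorings. Operation (iii) is the easiest: deleting vertices, edges and faces on one side of a chordal edge produces an induced subposet of $\vef{}$ (after noting the chordal edge and its incident faces are still present or handled), and dimension does not increase when passing to induced subposets. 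Operations (i) and (ii) require exhibiting, from a $3$-realizer of the smaller/larger map, a $3$-realizer of the other; the natural approach is to trace through the explicit three partial orders constructed in the proof of \reflemma{lemma:canonical_map_dim} and check that contracting or subdividing a $q$-edge, with the new chordal edges inheriting colors and orientations, can be accommodated by inserting the new vertex/edge/face "as low as possible" in each color exactly as in that proof, and verifying Claim~R still holds.

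The main obstacle I expect is operation (ii), the subdivision step: one must verify that when a $q$-edge is subdivided and a new chordal edge (with a prescribed color and orientation) is attached to the opposite $p$-vertex, the resulting interior faces are still triangles, the coloring stays permissible, and — crucially — the new critical pairs (the vertex-edge and vertex-face pairs involving the new vertex, new chordal edge, and the two new triangular faces) are all reversed in the appropriate color. Since subdivision can be iterated arbitrarily many times on the same $q$-edge, the argument has to be uniform and not rely on there being only boundedly many vertices; the cleanest way is probably to describe directly the $3$-realizer of an arbitrary maximal path-like map with a permissible coloring, generalizing the construction of \reflemma{lemma:canonical_map_dim} (vertices ordered along the outer boundary in each color according to the orientation pattern, chordal edges placed just above $F_\infty$ in their own color and as low as possible otherwise, interior faces as low as possible), and then re-run the case analysis of Claim~R in this general setting. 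In either formulation, the work is a finite case check on the types of critical pairs, and permissibility is precisely what makes the check go through — for instance \refprop{prop:2_out} is what rules out the monochromatic red alternating cycle, and \refprop{prop:bicolor} together with \reflemma{lemma:chordal_edge} is what lets bicolored faces be reversed.

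Finally, I would remark that the backward direction combined with \reflemma{lemma:canonical_map_construct} and \reflemma{lemma:canonical_map_dim} also yields the linear-time algorithm promised in the introduction: one checks for a permissible coloring (which, by \reflemma{lemma:sink_or_source}, \refprop{prop:2_out}, its corollary and \refprop{prop:bicolor}, is a purely local condition on the sink/source structure and the colors of faces), and if it exists, reads off the three linear extensions from the explicit construction.
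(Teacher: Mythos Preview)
Your overall strategy matches the paper's: the forward direction is just a summary of the preceding lemmas, and the backward direction proceeds via \reflemma{lemma:canonical_map_construct} and \reflemma{lemma:canonical_map_dim}, checking that each of the three operations preserves $\dim(\vef{})\le 3$. For operations (i) and (ii) your sketch is in line with what the paper does (the paper gives an explicit modification of the realizer for (i), and for (ii) simply points back to the construction in \reflemma{lemma:canonical_map_dim}).

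The one genuine gap is your treatment of operation (iii). Deleting everything on one side of a chordal edge $e$ does \emph{not} yield an induced subposet of $\vef{M}$: after the deletion, $e$ becomes a boundary edge and hence acquires the new incidence $e < F_\infty$, which was absent before. Consequently the monotonicity-of-dimension argument does not apply directly. Your parenthetical hedge (``the chordal edge and its incident faces are still present or handled'') acknowledges something is going on but does not resolve it. The paper handles this explicitly: in the realizer one moves $e$ below $F_\infty$ in $L_c$ (where $c$ is the color of $e$; recall $e > F_\infty$ in $L_c$ by \reflemma{lemma:chordal_edge}), and then observes that the only new critical pair is $(v,e)$ with $v$ the third vertex of the remaining triangle on $e$. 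This pair is reversed because the old critical pair $(v,F)$, with $F$ the deleted face containing $e$, was reversed in some $L_i$, and $e < F$ in $L_i$ forces $v > e$ there. So (iii) is still easy, but not for the reason you give.
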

\begin{proof}
If $\dim(\vef{M}) = 3$, then there is a permissible coloring  of
the chordal edges. In view of \reflemma{lemma:canonical_map_dim}
we only have to prove that the operations of
\reflemma{lemma:canonical_map_construct} do not increase the dimension.

\textbf{(i):} By symmetry we only have to consider the contraction of
$\{q_2,q_3\}$. The new merged vertex $q_{2,3}$ takes the place of
$q_2$ in green and blue, and the place of $q_3$ in red.

The vertex-edge pairs involving $q_{2,3}$ are reversed in
green and red. Now, $q_{2,3}$ is only below the old position of $q_2$
in red. But the only vertex-face critical pair with $q_2$ that was
reversed in red is $(q_2,F_4)$, and $q_{2,3} \in F_4$, so all
critical pairs involving $q_{2,3}$ are reversed.

Now, the position of a face in a partial order can only change if it
contains $q_2$ or $q_3$ as its highest vertex. The new vertex
$q_{2,3}$ is as high as $q_2$ in green and blue, and as high as $q_3$
in red, so the only affected face is $F_5$ in blue. In the blue
partial order, the critical pairs $(v, F_5)$, $v \in \{p_0, p_1,
q_1\}$, are not reversed anymore. This is taken care of by moving
$q_1$ (and hence $p_1$ and $p_0$) above $q_4$ in red. Since none of
$p_0$, $p_1$ and $q_1$ were comparable to $q_4$ in red before, all
previously reversed critical pairs are still reversed.

\textbf{(ii):} The partial orders are constructed as before (with
possible changes resulting from a $q$-edge contraction). By an argument
similar to the proof of \reflemma{lemma:canonical_map_dim}, $\dim(\vef{M})=3$.

\textbf{(iii):} The only incidence that changes among the remaining
elements of $\vef{M}$ is that one chordal edge $e$ now is on the outer
face. The edge $e$ is moved below $F_{\infty}$ in its color. The only
new critical pair is $(v,e)$, where $v$ is the vertex in the same
interior face as $e$ that is not in $e$. But previously, there was a
critical pair $(v,F)$, $e \in F$, which was reversed, so $(v,e)$ must
be reversed.
\end{proof}

We summarize the result in a theorem:
\begin{thm}
\label{thm:path_like_dim}
Let $M$ be a maximal path-like map. Then the following three conditions are equivalent:
\Item(i) $\dim(\vef{M}) = 3$
\Item(ii) There is a permissible coloring of the chordal edges of $M$. 
\Item(ii) $M$ can be obtained from the canonical map via the operations in 
\reflemma{lemma:canonical_map_construct}.
\end{thm}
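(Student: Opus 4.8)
The plan is to establish the cyclic chain of implications $(i) \Rightarrow (ii) \Rightarrow (iii) \Rightarrow (i)$, assembling them from the lemmas and propositions already proved in this section. Most of the work has in fact been done; what remains is bookkeeping to confirm that every piece is in place.

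First I would prove $(i) \Rightarrow (ii)$. Suppose $\dim(\vef{M}) = 3$ and fix a realizer $L_1, L_2, L_3$. As explained in the paragraph preceding \reflemma{lemma:angle_color}, the realizer induces an angle 3-coloring of the interior triangles: for each interior angle, pick a color $i$ such that the corresponding critical pair is reversed in $L_i$. By \reflemma{lemma:edge_color} this angle coloring can be encoded as an oriented coloring of the chordal edges. It then suffices to observe that this oriented coloring satisfies the three defining conditions of a permissible coloring: \reflemma{lemma:same_orient} gives the common orientation of all interior triangles, \refprop{prop:2_out} gives that no two outgoing edges at a vertex share a color, and \refprop{prop:bicolor} gives that no two bicolored faces have the same pair of colors. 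Hence the coloring is permissible.

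Next, $(ii) \Rightarrow (iii)$ is exactly the content of \reflemma{lemma:canonical_map_construct}: any maximal path-like map carrying a permissible coloring of its chordal edges can be built from the canonical map by a sequence of the operations (i), (ii), (iii) of that lemma. Finally, $(iii) \Rightarrow (i)$ follows by combining \reflemma{lemma:canonical_map_dim}, which shows $\dim(\vef{M}) = 3$ for the canonical map itself, with the three cases of the proof of \refprop{prop:path_like_dim}, which verify that none of the three construction operations increases the dimension beyond $3$; since every map obtained this way contains a vertex of degree $3$, its dimension is at least $3$, so equality holds. Closing the cycle establishes the equivalence of all three conditions.

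I do not expect a genuine obstacle here, since the statement is explicitly billed as a summary ("We summarize the result in a theorem"). The only point requiring a little care is the direction $(i) \Rightarrow (ii)$: one must be sure that the several lemmas (\reflemma{lemma:same_orient}, \refprop{prop:2_out}, \refprop{prop:bicolor}) were all proved \emph{from} an arbitrary realizer-induced angle coloring, and hence apply to the coloring extracted from any $3$-realizer — not merely to some specially constructed coloring. Inspecting their proofs confirms this: each begins "consider a triangle with the angle coloring described above" or the analogous phrase and uses only the alternating-cycle obstruction together with the previously established structural lemmas. With that observation in hand, the proof of the theorem is just the three-line chain above.
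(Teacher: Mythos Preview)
Your proposal is correct and matches the paper's approach exactly: the paper states the theorem with the preamble ``We summarize the result in a theorem'' and gives no separate proof, relying implicitly on precisely the chain $(i)\Rightarrow(ii)$ via Lemmas~\ref{lemma:same_orient}, Propositions~\ref{prop:2_out} and~\ref{prop:bicolor}; $(ii)\Rightarrow(iii)$ via Lemma~\ref{lemma:canonical_map_construct}; and $(iii)\Rightarrow(i)$ via Lemma~\ref{lemma:canonical_map_dim} together with the operation-by-operation analysis in the proof of Proposition~\ref{prop:path_like_dim}. Your care in checking that the structural lemmas apply to \emph{any} realizer-induced coloring is well placed and is indeed how those lemmas were set up.
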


\subsection{Algorithmic aspects}\label{sec:alg}

\refthm{thm:path_like_dim} can easily be turned into an algorithm for
testing if $\dim(\vef{M}) \leq 3$ for a maximal path-like map $M$. 
Start by fixing one of the two possible 
orientations and a color some chordal edge. This induces an angle
coloring in the adjacent triangles (\reflemma{lemma:edge_color},
\reflemma{lemma:angle_color}). \reflemma{lemma:same_orient} now gives
us the colors of the angles in all 
the interior triangles in $M$, which in turns induces an oriented
coloring of the chordal edges. Hence, given a fixed orientation of one
chordal edge, any permissible coloring is unique up to permutations of
the colors. To test for $\dim(\vef{M}) \leq 3$, we  check if
any vertex has four outgoing edges or if any two bicolored faces have
the same colors. This can be done in linear time.

Once we have a permissible coloring of the chordal edges of $M$
a  3-realizer can be generated: Since we now know which
vertices are sinks, we know the $p$-edges and $q$-edges of the 
colored map $M$ and can identify the operations of
\reflemma{lemma:canonical_map_construct} (contracting a $q$-edge,
subdividing a $q$-edge and removing a part of the map) that have to be 
applied to get $M$ from the canonical map. The proof of
\reflemma{lemma:canonical_map_dim} gives us a 3-realizer  of the
canonical map, and the proof of \refprop{prop:path_like_dim}
demonstrates how to modify the 3-realizer for each of the
operations. This yields an algorithm to produce a 3-realizer of
$\vef{M}$ from
a permissible coloring of $M$. It is clear from the proofs of
\reflemma{lemma:canonical_map_dim} and \refprop{prop:path_like_dim} that
the running time of this algorithm can be bounded by some constant
times the number of elements in $\vef{M}$. Since $M$ is a planar map,
we have the following theorem. 

\begin{thm}
\label{thm:3-realizer}
There is an algorithm running in time $O(n)$, which takes as input a
maximal path-like map $M$ with $n$ vertices and either 
 returns a 3-realizer of $\vef{M}$, or certifies
 that $\dim(\vef{M}) = 4$.
\end{thm}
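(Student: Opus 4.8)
The plan is to assemble the linear-time algorithm as a straightforward composition of the structural and algorithmic ingredients developed in this section, and then to verify that each stage runs in time $O(n)$. Since $M$ is a planar map with $n$ vertices, the total number of elements of $\vef{M}$ (vertices, edges, and faces) is $\Theta(n)$ by Euler's formula, so $O(n)$ and ``linear in the size of $\vef{M}$'' are the same bound; I will use this freely.

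First I would describe the test for $\dim(\vef{M}) \le 3$. Pick any chordal edge $e_0$ and fix one of its two orientations and a color for it. By \reflemma{lemma:edge_color} and \reflemma{lemma:angle_color} this forces the angle coloring in the two triangles containing $e_0$, and \reflemma{lemma:same_orient} together with \reflemma{lemma:edge_color} then propagates a unique angle coloring (hence a unique oriented coloring of the chordal edges) across the whole interior-dual path. Concretely, one walks along the interior-dual path of $M$, which has $O(n)$ vertices, and in each step extends the coloring to the next triangle in constant time; this produces the candidate coloring in $O(n)$ time, unique up to the global choices of orientation and color permutation. By \refthm{thm:path_like_dim}, $\dim(\vef{M})=3$ iff this coloring is permissible, i.e., iff it satisfies \reflemma{lemma:same_orient} (automatic by construction), \refprop{prop:2_out} (no two outgoing edges of a vertex share a color), and \refprop{prop:bicolor} (no two bicolored faces have the same color pair). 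Both remaining conditions are checked by a single pass over vertices and over faces, recording for each vertex the multiset of outgoing-edge colors and for each face its color set, again in $O(n)$ time. If either condition fails, the algorithm returns ``$\dim(\vef{M}) = 4$'', which is correct because \refthm{thm:bt1} gives $\dim(\vef{M}) \le 4$ always and \refthm{thm:path_like_dim} rules out $3$.

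Next I would handle the construction of a $3$-realizer when the coloring is permissible. Having the permissible coloring, we know which vertices are sinks and which are sources, hence which are the $p$- and $q$-vertices, and by \reflemma{lemma:canonical_map_construct} we can read off the sequence of operations (contract a $q$-edge, subdivide $q$-edges, delete parts beyond a chordal edge) that builds $M$ from the canonical map; identifying these operations from the colored map is a constant amount of work per vertex/edge, so $O(n)$ overall. We then start from the explicit $3$-realizer of the canonical map given in the proof of \reflemma{lemma:canonical_map_dim} and apply the modifications prescribed in the proof of \refprop{prop:path_like_dim} for each operation. Each modification affects only a bounded neighborhood of the operation's edge (a $q$-edge contraction touches $F_5$ and the red positions of $p_0,p_1,q_1$; a subdivision inserts new vertices and chordal edges locally; a deletion just moves one chordal edge below $F_\infty$ in its color and introduces one new critical pair $(v,e)$ that is already handled), so each costs $O(1)$ amortized, or at worst $O(n)$ in total across all operations. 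Finally, topologically sorting the three resulting partial orders to obtain honest linear extensions costs $O(n)$ as well. Collecting the bounds, the whole procedure runs in $O(n)$ time.

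The main obstacle, and the step I would spend the most care on, is the running-time bookkeeping for the $3$-realizer construction: the proofs of \reflemma{lemma:canonical_map_dim} and \refprop{prop:path_like_dim} are written as correctness arguments, not as algorithms, and one must check that applying a long sequence of the three operations never forces a global rebuild of a partial order. The key point to make explicit is that the ``as low as possible'' placements of Hamilton-cycle edges, chordal edges, and interior faces are determined locally by the vertex order and the incidences, so maintaining the three orders incrementally is legitimate; and that the cumulative effect of all operations changes only $O(n)$ order relations. Once this is spelled out, the theorem follows by chaining the $O(n)$ bounds from the three stages (coloring/propagation, permissibility check, realizer assembly and topological sort), and I would close the proof there.
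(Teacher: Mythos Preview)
Your proposal is correct and follows essentially the same approach as the paper: propagate a unique oriented coloring along the interior-dual path from one fixed chordal edge, test permissibility in linear time, and if permissible, assemble the $3$-realizer by starting from the explicit realizer of the canonical map and applying the local modifications from \refprop{prop:path_like_dim} for each operation of \reflemma{lemma:canonical_map_construct}. Your write-up is in fact more explicit about the running-time bookkeeping (locality of the realizer updates, final topological sort) than the paper's own argument, which simply asserts that the modifications cost a constant times the size of $\vef{M}$.
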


\section{Vertex-face posets of dimension at most 3}
\label{sec:vf}

From \refthm{thm:k4_lower_bound} we know that if $\dim(\vf{M}) \leq 3$, then $M$ does
not contain a subdivision of $K_4$. \reffig{fig:bt_example} shows an example 
of a planar map which contains no $K_4$-subdivision but still $\dim(\vf{M}) = 4$.
This example from~\cite{BT97}, has a dual map which is a
$K_{2,3}$, with each edge replaced by a 2-face. More generally every 
map $M$ where we can find fences of vertices and faces like 
in the proof of \refthm{thm:k23_lower_bound} must have $\dim(\vf{M})=4$.

   \calc_figscale{100}
    \begin{figure}[htb]
    \centerline{\input{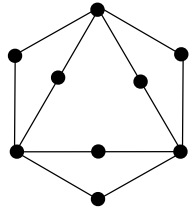}}
    \caption{A planar map with vertex-face poset dimension 4.\label{fig:bt_example}}
    \end{figure}
    

However, unlike in the vertex-edge-face case, there are no 2-connected
maps $M$ of dimension 4 such that both $M$ and $M^*$ are weakly outerplanar.

\begin{thm} \label{thm:vf_dim_upper_b_2c}
Let $M$ be a simple 2-connected planar map such both $M$ and its dual $M^*$ are weakly 
outerplanar. Then $\dim(\vf{M}) \leq 3$.
\end{thm}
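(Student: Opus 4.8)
The plan is to reduce to the path-like case handled in Section~\ref{sec:path_like} and then exploit the extra freedom we have for $\vf{M}$ over $\vef{M}$. By \refprop{prop:pathlike} (or rather its underlying argument, together with \refcor{cor:k23-free}), the map $M'$ obtained by flipping all chordal edges of $M$ into the interior of the Hamilton cycle $H$ is path-like, and since $\dim(\vf{M'})\le\dim(\vef{M'})$ it suffices in fact to work with $M'$: flipping a chordal edge to the outside only changes which face a vertex-face critical pair involves, and as in the proof of case (iii) of \refprop{prop:path_like_dim} such a change is harmless for a realizer. Hence we may assume $M$ is path-like, i.e.\ strongly outerplanar with interior dual a path; the interior faces need not be triangles, but we may triangulate all interior faces, since adding chordal edges to $M$ only adds elements to $\vef{M}$ and can only increase $\dim(\vf{M})$ — so it is enough to treat a \emph{maximal} path-like map $M$.

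The key point is that a maximal path-like map always admits a permissible coloring of its chordal edges once we are allowed to drop the requirement coming from bicolored faces. More precisely, the obstruction to $\dim(\vef{M})=3$ in \refprop{prop:path_like_dim} is the failure of \refprop{prop:2_out} or \refprop{prop:bicolor}; but a vertex-face critical pair $(v,F)$ where $F$ is the outer face $F_\infty$ does not exist in $\vef{M}$ at all for $v$ on the Hamilton cycle (every such $v$ lies on $F_\infty$), and for $\vf{M}$ we only have vertex-face pairs. So I would start again from the canonical map of \reffig{fig:canonical_map} and build three linear orders exactly as in the proof of \reflemma{lemma:canonical_map_dim}, but now only requiring that every vertex-face critical pair be reversed. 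The crucial observation is that the edge-face and vertex-edge critical pairs of $\vef{M}$ — the ones that forced \refprop{prop:bicolor} and the outdegree bound — simply disappear in $\vf{M}$, so the coloring constraints reduce to \reflemma{lemma:same_orient} alone, which is always satisfiable: orient all chordal edges consistently so that all interior triangles are, say, clockwise, and color the angles accordingly. I would then verify, triangle by triangle and face by face exactly as in \reflemma{lemma:canonical_map_dim} and the three cases of \refprop{prop:path_like_dim}, that with the left/right/middle split of the vertex set the three orders reverse every $(v,F)$.

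The main obstacle is making the last verification clean for an \emph{arbitrary} maximal path-like map rather than only for the canonical one: after triangulation the interior dual is a long path of triangles, and one must check that the three orders — built from the clockwise boundary order on a ``left'' part, the counterclockwise boundary order on a ``right'' part, and a third order handling the middle — reverse $(v,F)$ for every interior face $F$ and every non-incident vertex $v$. I expect this to follow by the same induction on the number of triangles used to prove \reflemma{lemma:canonical_map_construct} and \refprop{prop:path_like_dim}, using the operations (i)–(iii): contracting or subdividing a $q$-edge and deleting one side of a chordal edge each affect only finitely many $(v,F)$ pairs, and the repairs described in case (i) of \refprop{prop:path_like_dim} carry over. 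A secondary subtlety is that without the bicolored-face restriction a vertex may now have three or more outgoing chordal edges, so the ``$p$-vertex / $q$-vertex'' dichotomy of the canonical map must be replaced by the actual sink/source structure of $M$; but \reflemma{lemma:sink_or_source} still applies to the chosen orientation, so every vertex is a sink or a source, and that is all the argument needs.
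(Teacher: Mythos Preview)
Your proposal has two genuine gaps, both stemming from the same mistake: you treat operations that change the face set of the map as if they made $\vf{M}$ a subposet of the modified poset.

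First, the reduction to the path-like map. Flipping a chordal edge from inside to outside the Hamilton cycle \emph{merges} two interior faces $F',F''$ into one face $F^+$ and \emph{splits} an exterior face $G$ into $G',G''$. Thus $\vf{M}$ and $\vf{M'}$ have different ground sets, and neither is a subposet of the other. Your appeal to case~(iii) of \refprop{prop:path_like_dim} is misplaced: that case is about deleting one side of a chordal edge, not about flipping it. The paper deals with exactly this difficulty by an explicit inductive construction: it builds two linear extensions $L_1^0,L_2^0$ for the path-like map $M_0=M'$ using the ``left of / right of'' order coming from the interior-dual path, and then, for each flip $M_i\to M_{i+1}$, carefully inserts $F^+$, $G'$, $G''$ into the positions formerly held by $F'$, $F''$, $G$ so that the left/right invariant (Claim~A) is maintained. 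A third extension with all faces above all vertices then finishes. None of this comes for free from ``harmless'' considerations.

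Second, the triangulation step. You write that ``adding chordal edges to $M$ only adds elements to $\vef{M}$'', but this is false: adding a chordal edge destroys a face and creates two new ones, so again $\vf{M}$ is not a subposet of the triangulated $\vf{M''}$, and a bound on $\dim(\vf{M''})$ does not give one on $\dim(\vf{M})$. (Even for $\vef{}$ the statement is wrong for the same reason.) Finally, you invoke \reflemma{lemma:sink_or_source} as if it held for $\vf{M}$, but its proof goes through \reflemma{lemma:chordal_edge}, which is about the edge--face critical pair $(e,F_\infty)$; that pair does not exist in $\vf{M}$, so the lemma would have to be re-proved from scratch. In short, the coloring machinery of \refsection{sec:path_like} is tied to $\vef{}$, and the paper's direct left/right construction for $\vf{}$ avoids it entirely.
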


\begin{proof}
We may assume that no two vertices $v,w$ of degree 2 are neighbors in
$M$: Two such vertices correspond to elements with identical
comparabilities in $\vf{M}$, so called twins. Hence,
contracting the edge $vw$ in the graph does not affect
$\dim(\vf{M})$.

From \refcor{cor:k23-free}, we know that if all chordal edges of $M$
are moved inside the Hamilton cycle, the dual of the resulting map $M'$ is
$K_{2,3}$-subdivision free. Hence, $M'$ is a path-like map. We will inductively
construct two linear extensions, $L_1$ and $L_2$, of $\vf{M}$, in which
all vertex-face pairs are reversed. Similar to the proof of
\refprop{prop:pathlike}, we start with the path-like map $M_0=M'$ and then
move the required chordal edges outside the Hamilton cycle one by one,
creating a series of maps $M_0,M_1,\ldots,M_k = M$.

The case $M_0=K_3$ is trivial, otherwise, the path-like map $M_0$ has
exactly two faces that contains only one chordal edge. Each of these
two faces contains a vertex of degree 2. Let these two vertices be
$\ell$ and $r$.

Given a face $F$ and a vertex $x$, if there is an $x$--$\ell$ path avoiding $F$,
then we say that {\em $x$ is left of $F$}. Symmetrically, {\em $x$ is right of $F$}
if there is an $x$--$r$ path avoiding $F$. 
This definition of left of and right of coincides with the
intuition of left and right based on a drawing 
where the Hamilton cycle is a circle, and $\ell$ and $r$ are its left
and right extreme points. 
Note that $x$ is neither  right, nor left of $F$ if and only
if $x \in F$. We define a vertex to be to the left (right) of a chordal edge
in the same way.

Next, we inductively construct two linear extensions $L_1^i$ and $L_2^i$ of the
vertex-face poset of $M_i$, for $i=0,1, \ldots, k$, such that
$L_1^k = L_1$ and $L_2^k = L_2$. Since $M_0$ is path-like, the
interior dual of $M_0$ is a path. In the 
linear extensions $L_1^0$ and $L_2^0$ we order the interior faces by
their position in this path, with the face
containing $\ell$ highest in $L_1^0$ and the face containing $r$
highest in $L_2^0$. The vertices and the outer face are 
inserted as high as possible.

Let $e$ be the chordal edge that is moved outside the Hamilton cycle when
$M_i$ is changed to $M_{i+1}$. Before it is moved, it is contained in the two faces 
$F'$ and $F''$ that are inside the Hamilton cycle. Let $F'$ be the left one, i.e.,
let $F'$ contain a vertex $u$ that is left of $e$. When $e$ is moved outside,
some face $G$ outside the Hamilton cycle is split into two faces $G'$ and $G''$.
Let $G'$ contain~$u$. The faces $F'$ and $F''$ are merged 
into a new face $F^+$ (see \reffig{fig:new-vertex_k23}).

   \calc_figscale{78}
    \begin{figure}[htb]
    \centerline{\input{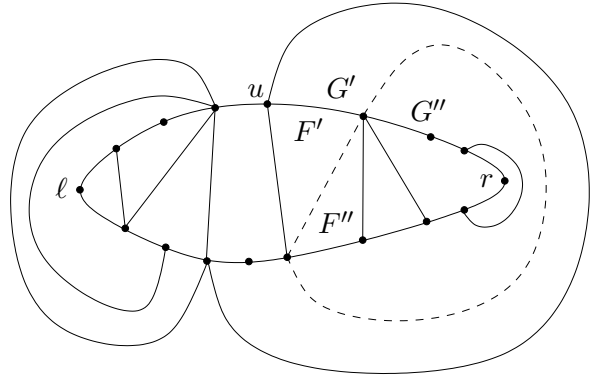}}
    \caption{Moving the dashed edge from the inside to the outside.\label{fig:new-vertex_k23}}
    \end{figure}
    

We can now create $L_1^{j+1}$ and $L_2^{j+1}$ from $L_1^{j}$ and
$L_2^{j}$. In $L_1^{j+1}$, $F^+$ is inserted in the position of $F'$ in
$L^j_1$, $G'$ is inserted in the position of $G$ and $G''$ is inserted
in the position of $F''$. In $L_2^{j+1}$, $F^+$ is inserted in the
position of $F''$ in $L_2^j$, and $G''$ and $G'$ are inserted in the
positions of $G$ and $F'$, respectively.

\Claim A. For $i=0,1,\ldots,k$, $L_1^i$ and $L^i_2$ are linear extensions of $\vf{M_i}$. 
If a vertex $v$ is to the left of a face $F$ in the map $M_i$,
then $v > F$ in $L_1^i$, and if $v$ is to the right of $F$, $v > F$ in $L_2^i$.
  \smallskip

  \ni
  The claim can be verified by induction on $i$. From the construction of
  $L_1^0$ and $L_2^0$, it is clear that the claim is true for $i=0$.
  
  Suppose the claim is true for $i$. The map $M^{i+1}$ is
  constructed by moving the edge $e$ in $M_i$ outside the Hamilton
  cycle. To verify that $L_1^{i+1}$ and $L_2^{i+1}$ are linear 
  extensions of $\vf{M_{i+1}}$ it is enough to check that $F$, $G'$ and
  $G''$ are above all the vertices contained in them. This
  is immediate from the construction.

  It remains to prove the second part of the claim. 
  By induction and symmetry it is enough to consider the case where
  $v$ is to the left of $e$ and $F$ is one of $F^+$, $G'$ and $G''$.
  If $v$ is to the left of $e$, either $v \in F'$ or $v$ is to the left of
  $F'$. In the latter case $v$ is above $F'$ in $L_1^i$, so $v > F^+$ in $L_1^{i+1}$.
  If $v$ is to the left of $G$, then $v$ is also to the left of $G'$
  and $G''$. Since $v > G$ in $L_1^{i}$, $v > G' > G''$ in
  $L_1^{i+1}$. On the other hand, if $v \in G$, $v$ must also be in
  $G'$ and to the left of $G''$. But $v$ is to the left of $F''$, so
  $v > F''$ in $L_1^i$ by construction, and hence $v > G''$ in
  $L_1^{i+1}$. Hence, the claim is true for $i+1$.
  \qedclaim

Claim A implies that all vertex-face critical pairs of $\vf{M}$ are
reversed in $L_1$ and $L_2$.  It remains is to find a linear extension
$L_3$ of $\vf{M}$ which reverses all vertex-vertex and face-face
critical pairs. Such a linear extension can be obtained by taking all
faces above all vertices. The order of the faces in $L_3$ is choosen
as the reverse of the order of faces in $L_1$ and alike the order of
vertices is reversed between $L_3$ and $L_1$. Together $L_1,L_2,L_3$
reverse all critical pairs. Hence, they form a realizer and
$\dim(\vf{M})\leq 3$.
\end{proof}

In a strongly outerplanar map $M$ it is never the case that we can find fences 
like in the proof of \refthm{thm:k23_lower_bound}. The interior dual is a tree, 
and $F_{\infty}$ contains all the vertices. Hence, $F^*_{\infty}$ has to be one of the 
degree 3-vertices of any $K_{2,3}$-subdivision in $M^*$ ($M$ contains
no $K_{2,3}$-subdivision since it is outerplanar). 
Therefore, the existence of a strongly outerplanar map of vertex-face dimension~4 
is not obvious.

   \calc_figscale{100}
    \begin{figure}[htb]
    \centerline{\input{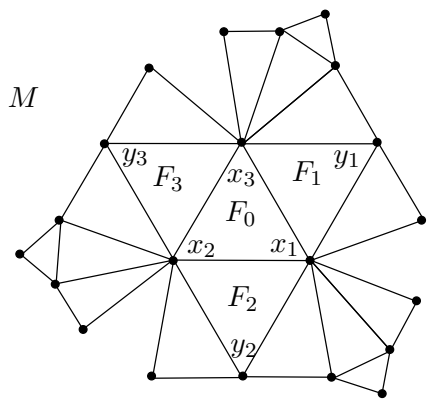}}
    \caption{An outerplanar map with $\dim(\vf{M}) = 4$.\label{fig:vf_graph}}
    \end{figure}
    

 \begin{thm} \label{thm:vf_dim4}
 The strongly outerplanar map $M$ shown in \reffig{fig:vf_graph} has a vertex-face poset of dimension
 four.
 \end{thm}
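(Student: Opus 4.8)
The plan is to show that $\dim(\vf{M}) \geq 4$ for the specific $21$-vertex map $M$ by exhibiting a set of critical pairs of $\vf{M}$ whose hypergraph of alternating cycles has chromatic number at least $4$; combined with Theorem \ref{thm:bt1} this gives $\dim(\vf{M}) = 4$. First I would set up notation by labeling the vertices and the interior faces of $M$ as drawn in \reffig{fig:vf_graph}, and record, for each interior face $F$, exactly which vertices lie on $\partial F$ and which do not. Since $M$ is maximal outerplanar, every interior face is a triangle, so for each triangle $F$ and each vertex $v \notin F$ we get a min--max critical pair $(v,F)$ in the height-$2$ poset $\vf{M}$ (and symmetrically faces above $F_\infty$ when they are incomparable — but here the relevant pairs are vertex--face pairs with the three triangle-vertices versus some face, together with $F_\infty$ where needed).

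Next I would argue by contradiction: suppose $\{L_1,L_2,L_3\}$ is a realizer of $\vf{M}$. The key structural observation — analogous to the fence arguments in the proof of \refthm{thm:k23_lower_bound} — is that because $F_\infty$ contains all $21$ vertices, $F_\infty^*$ is forced to play the role of one degree-$3$ vertex in any obstruction, which restricts how faces can sit relative to vertices in the three linear extensions. Concretely, I would track, for each interior face $F$, in which $L_i$ the pair $(v,F)$ can be reversed, for each $v \notin F$; since $F$ is a triangle, the three vertices on $\partial F$ give three "angle" critical pairs that by a \reflemma{lemma:angle_color}-style argument cannot be reversed in the same extension, which colors the angles of each triangle with the three colors $\{1,2,3\}$. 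The structure of $M$ — in particular a carefully placed inner $K_{2,3}$-like pattern of faces wrapping around a central region — should then force a monochromatic alternating cycle among suitably chosen $(v,F)$ pairs, or equivalently force two $\mathbf{2}+\mathbf{2}$ subposets whose critical pairs must all be reversed by the single remaining extension, which is impossible since $\dim(\mathbf{2}+\mathbf{2}) = 2$.

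The main obstacle will be the bookkeeping: with $21$ vertices and $19$ interior faces the case analysis of "which color each angle gets" is large, and the cleanest route is probably to isolate a small induced subposet $\mathbf{S}$ of $\vf{M}$ — a crown or a union of incompatible $\mathbf{2}+\mathbf{2}$'s together with $F_\infty$ — on a dozen or so elements, prove directly that its alternating-cycle hypergraph has chromatic number $4$ (as in the $K_4$ example in the preliminaries, but one level up), and then invoke that $\dim$ is monotone under taking induced subposets. I expect the bulk of the work, and the reason the example is as large as it is, to be verifying that no $3$-coloring of the angles of $\mathbf{S}$ avoids a monochromatic alternating cycle; this is a finite check that I would organize around the forced propagation of angle colors across shared edges (\reflemma{lemma:edge_color} gives the template, though here edges are replaced by shared vertices of triangles in $\vf{M}$) and the constraint that $F_\infty$ lies below every vertex it is incomparable to in at least two of the three extensions.
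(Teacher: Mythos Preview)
Your overall framework---assume a $3$-realizer, color the critical pairs, propagate constraints, and derive a contradiction---matches the paper's. But there are two concrete problems.

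First, a factual error: you write that ``$F_\infty$ lies below every vertex it is incomparable to.'' In a strongly outerplanar map $F_\infty$ contains \emph{every} vertex, so $F_\infty$ is comparable to (and above) all vertices; the relevant critical pairs involving $F_\infty$ are the face--face pairs $(F,F_\infty)$ for interior faces $F$. This matters, because the paper's key structural observation comes exactly from here: if $(F,F_\infty)$ is reversed in $L_i$, then $F>F_\infty>v$ in $L_i$ for every vertex $v$, so \emph{no} pair $(v,F)$ can be reversed in $L_i$. Consequently, for each interior face $F$ the pairs $(v,F)$ live in only two of the three colors. You never state this two-color constraint, and without it the propagation you sketch has no teeth.

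Second, the machinery you propose to import from \refsection{sec:path_like} does not transfer cleanly. Lemmas~\ref{lemma:angle_color} and~\ref{lemma:edge_color} are about $\vef{M}$, where edges are elements of the poset and the angle pairs are vertex--edge/face pairs inside a single triangle; in $\vf{M}$ there are no edges, and the ``angle'' structure you want is really a $3$-crown formed by the three vertices of an \emph{interior} triangle and the three \emph{surrounding} faces (this is the paper's Fact~B). Your plan to ``isolate a small subposet $\mathbf{S}$'' and check its chromatic number directly is unlikely to work on its own: the paper's argument genuinely uses the global constraint from Fact~A (two colors per face) together with the symmetry of $M$ to fix colors around the central triangle, then propagates outward through two further layers of faces until a single pair $(z_1,F_0)$ is shown to form an alternating cycle with already-colored pairs of every color. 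The example is large precisely because this propagation needs room; a dozen-element subposet will not by itself have chromatic number~$4$.
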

 \begin{proof}

Suppose $\dim(\vf{M}) \leq 3$. Again we identify the three linear
extensions of a realizer with three 
colors and use these to color critical pairs. Our main focus will be
on the coloring of critical pairs 
involving a vertex and an interior face. 

\Fact A. 
For an interior face $F$ only two colors appear at critical pairs~$(v,F)$.\smallskip

\ni
Suppose that the critical pair $(F,F_{\infty})$ is reversed in color
$i$. This forces all vertices below $F$ in color $i$, and hence all
critical pairs $(v,F)$ are reversed in the other two colors.
\qedclaim

\Fact B. 
If a triangular face $\Delta=\{v_1,v_2,v_3\}$ is surrounded by
interior faces $F_1,F_2,F_3$ such that $v_i \not\in F_i$, then the
three critical pairs $(v_i,F_i)$ use all three colors.
\smallskip

\ni 
Any two of the three critical pairs form an alternating cycle and,
hence, require different colors.  Equivalently, the order induced by
$v_1,v_2,v_3,F_1,F_2,F_3$ is a 3-crown.  
\qedclaim

These two facts are applied to the central face $F_0$ of the map $M$:
Fact B implies that the three critical pairs $(x_i,F_{i-1})$ use all
three colors.  Symmetry among the colors allows us to assume that
$(x_2,F_1)$ is red, $(x_3,F_2)$ green and $(x_1,F_3)$ blue. Fact A
implies that two of the three critical pairs $(y_i,F_0)$ have the same
color. The symmetry of the graph allows to assume that this duplicated
color is blue.  It is infeasible to have $(y_3,F_0)$ in blue, because
it forms an alternating cycle with the blue pair $(x_1,F_3)$.  Hence,
$(y_1,F_0)$ and $(y_2,F_0)$ are both blue.

To reach a contradiction we can from now on concentrate on the
submap of $M$ shown in \reffig{fig:vf_steps}. The colors of critical
pairs which have already been fixed are indicated by
the colored arrows in the figure.

   \calc_figscale{100}
    \begin{figure}[htb]
    \centerline{\input{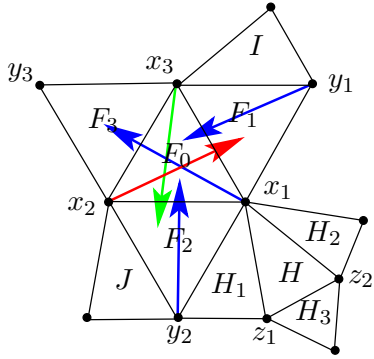}}
    \caption{Having fixed the colors of some critical pairs we concentrate on a submap $M'$ of $M$.\label{fig:vf_steps}}
    \end{figure}
    

To avoid a monochromatic alternating cycle with the blue pair $(y_1,F_0)$ and
with the green pair $(x_3,F_2)$ the color of $(x_1,I)$ has to be red.
Similarly, the colors of $(y_2,F_0)$ and $(x_2,F_1)$ imply that $(x_1,J)$ is green.

From the critical pairs $(x_1,J)$ and $(x_1,F_3)$  we know that $x_1 > x_2$ in green
and blue, so all critical pairs $(x_2,F)$, where $x_1 \in F$, must be
red. Similarly, all critical pairs $(x_3,F)$, where $x_1 \in F$, must be
green. In particular we have $(x_2,H_i)$ red and $(x_3,H_i)$ green for $i=1,2$.

From Fact A applied to $H_1$ and $H_2$ we can conclude that neither 
$(z_1,H_2)$ nor $(z_2,H_1)$ can be blue. Applying Fact B to face $H$
we can conclude that $(x_1,H_3)$ is blue.
See \reffig{fig:vf_more}.

   \calc_figscale{100}
    \begin{figure}[htb]
    \centerline{\input{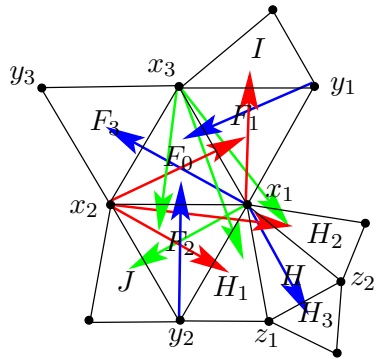}}
    \caption{Colored critical pairs on $M'$.\label{fig:vf_more}}
    \end{figure}
    

Consider the critical pair $(z_1,F_0)$. It forms an alternating cycle with
 $(x_1,H_3)$, hence it can't be blue. It forms an alternating cycle with
 $(x_2,H_1)$, hence it can't be red. It forms an alternating cycle with
 $(x_3,H_1)$, hence it can't be green. Consequently there is no legal
3-coloring of the hypergraph of critical pairs of $\vf{M}$.
\end{proof}

The maximal outerplanar map $T_4$ shown in \reffig{fig:vf_graph_sym}
has a vertex-face poset of dimension 3 (a 3-realizer is listed in
Table~\ref{table:realizer}).  This suggests that the example of a
strongly outerplanar map $M$ with $\dim(\vf{M}) = 4$ given in
\refthm{thm:vf_dim4} is close to a minimal example.
\reffig{fig:vf_non_2con} shows a map where all 2-connected components
are submaps of $T_4$ and hence have vertex-face poset dimension
3. Still an argument as in \refthm{thm:vf_dim4} shows that the map in
\reffig{fig:vf_non_2con} has a 4-dimensional vertex-face poset.

   \calc_figscale{100}
    \begin{figure}[htb]
    \centerline{\input{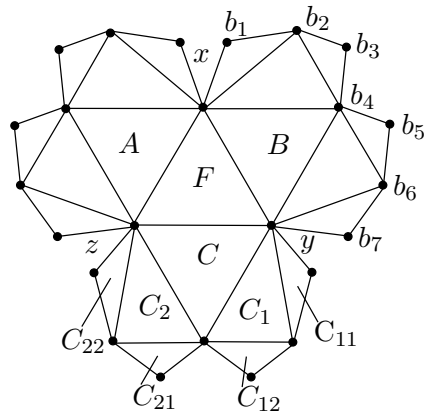}}
    \caption{A 2-connected outerplanar map with vertex-face
  dimension 3. The naming scheme of the faces and vertices of the map
  is indicated in the figure.\label{fig:vf_graph_sym}}
    \end{figure}
    

\begin{table}[h!] \label{table:realizer}
\begin{tabular}{ccc@{\hspace{2cm}}ccc@{\hspace{2cm}}ccc}

$L_1^1$        &             $L_2^1$         &           $L_3^1$          &          $L_1^2$          &          $L_2^2$           & $L_3^2$         & $L_1^3$        & $L_2^3$         & $L_3^3$         \\         
\hline                                                                                                                                                                                  
$A_{22}$     &              $C$          &            $C_{11}$      &          $B_{2}$        &           $F$            &  $c_7$        & $c_7$        &  $a_1$        &  $b_7$        \\         
$A_{2}$      &              $B_{12}$     &            $C_{1}$       &          $b_4$          &           $C_{11}$       &  $a_2$        & $C_{2}$      &  $A_1$        &  $B_{2}$      \\         
$A$          &              $B_{21}$     &            $C_{12}$      &          $B_{22}$       &           $c_1$          &  $a_1$        & $C$          &  $A$          &  $B$          \\         
$F$          &              $B_{2}$      &            $C_{21}$      &          $b_6$          &           $C_1$          &  $a_3$        & $z$          &  $z$          &  $y$          \\         
$B$          &              $B_{22}$     &            $C_{2}$       &          $b_7$          &           $y$            &  $a_5$        & $C_{21}$     &  $A_{12}$     &  $B_{21}$     \\         
$B_{1}$      &              $F_{\infty}$ &            $C_{22}$      &          $A_{21}$       &           $C_{12}$       &  $C$          & $c_6$        &  $a_2$        &  $b_6$        \\         
$B_{11}$     &              $b_6$        &            $A_{11}$      &          $a_6$          &           $c_2$          &  $c_4$        & $c_5$        &  $a_3$        &  $b_5$        \\         
$F_{\infty}$ &              $b_3$        &            $A_{1}$       &          $a_5$          &           $c_3$          &  $A_{22}$     & $C_{12}$     &  $A_{21}$     &  $B_{12}$     \\         
$x$          &              $b_5$        &            $A_{12}$      &          $A_{12}$       &           $C_{21}$       &  $a_7$        & $c_3$        &  $a_5$        &  $b_3$        \\         
$a_7$        &              $b_7$        &            $A_{21}$      &          $a_3$          &           $c_5$          &  $A_{2}$      & $C_{1}$      &  $A_2$        &  $B_1$        \\         
$b_1$        &              $B_{11}$     &            $F_{\infty}$  &          $A_1$          &           $C_2$          &  $a_6$        & $c_4$        &  $a_4$        &  $b_4$        \\         
$B_{12}$     &              $b_1$        &            $c_2$         &          $a_4$          &           $c_4$          &  $A$          & $C_{11}$     &  $A_{22}$     &  $B_{11}$     \\         
$b_2$        &              $B_1$        &            $c_6$         &          $A_{11}$       &           $C_{22}$       &  $a_4$        & $y$          &  $x$          &  $x$          \\         
$b_3$        &              $b_2$        &            $c_1$         &          $a_2$          &           $c_6$          &  $F$          & $c_2$        &  $a_6$        &  $b_2$        \\         
$B_{21}$     &              $B$          &            $c_3$         &          $a_1$          &           $c_7$          &  $z$          & $c_1$        &  $a_7$        &  $b_1$        \\         
$b_5$        &              $b_4$        &            $c_5$         &          $C_{22}$       &           $A_{11}$       &  $B_{22}$     &              &               &               \\

\end{tabular}
\caption{A 3-realizer of the map in \reffig{fig:vf_graph_sym}. The order $L_i$ is 
obtained from the concatenation $L_i = L_i^1 \oplus L_i^2 \oplus L_i^3$.}
\end{table}

   \calc_figscale{80}
    \begin{figure}[htb]
    \centerline{\input{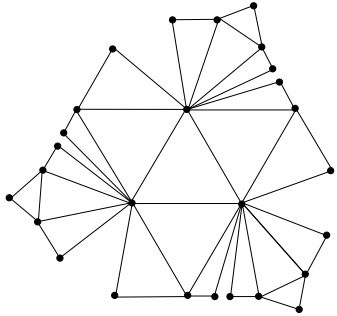}}
    \caption{A map $M$ with $\dim(\vf{M}) = 4$,
where each 2-connected component~$C$ has $\dim(\vf{C}) = 3$\label{fig:vf_non_2con}}
    \end{figure}
    

\section{Concluding remarks}

When we started our investigations we set out to characterize the
planar maps with vertex-edge-face posets of dimension at most 3. We proved
that for all such maps $M$, both $M$ and its dual $M^*$ must be weakly
outerplanar. In the case of maximal path-like maps, we found
necessary and sufficient conditions for dimension at most 3 using an
oriented coloring of the chordal edges. What remains open are four
cases:

\Bitem $M$ is path-like but not maximal, i.e., has non-triangular faces.

\smallskip\ni
  If $M$ can be obtained from a maximal path-like map $M_0$ by
  subdividing some edges of the Hamilton cycle, then $\dim(\vef{M}) =
  \dim(\vef{M_0})$.  To see this, consider a 2-connected map $M$ 
  with $\dim(\vef{M}) \leq 3$ and an edge $e = \{u,v\}$ of the Hamilton cycle.
  Let edge $e$ be subdivided in the new map $M'$ by a vertex $w$ incident to 
  the new cycle edges $e_1 = \{u,w\}$ and $e_2 = \{w,v\}$. We
  can modify a 3-realizer of $\vef{M}$ to obtain realizer of $\vef{M'}$ 
  in the following way. In each linear extension,
  insert $e_1$ in the old position of $e$ if $v < u$, and right
  below $v$ if $v > u$. Symmetrically, $e_2$ is inserted into the
  position of $e$ if $u < v$, and right
  below $u$ if $u > v$. The vertex $w$ is inserted right below
  $\min\{e_1, e_2\}$. This produces a 3-realizer of $\vef{M'}$. 
  The general case remains a challenge.

\Bitem $M$ is simple, and both $M$ and $M^*$ are weakly outerplanar, but neither of
  them is strongly outerplanar.

\smallskip\ni
  We can extend the coloring approach from
  \refsection{sec:path_like} to get a set of necessary conditions for
  $\dim(\vef{M})\leq 3$ in this case. There is a natural way to get an
  oriented coloring as in the case of 
 path-like maps, if $M$ is a different drawing of the graph of some maximal path-like map.
 Instead of coloring the angles of triangular faces, we
 color the angles of triangles in the strongly outerplanar drawing of
 the graph of $M$. 
 Again, this angle coloring can be encoded as
 an oriented coloring of the chordal edges. Instead of each vertex
 being a sink or a source, each vertex will now be a sink on one side
 of the Hamilton cycle and a source on the other side. The proof of
 this is similar to the proof of \reflemma{lemma:sink_or_source}.

If $F$ is a face in the 2-connected map $M$ with $\dim(\vef{M}) \leq
3$, the submap $M_F$ induced by the vertices in $F$ must be path-like
by \refthm{thm:k23_lower_bound}. In the same way, the submap
$M_{v^*}^*$ of the dual map $M^*$ induced by the dual vertices in the
dual face $v^*$ is also path-like.  Since $(x,y)$ is a critical pair in
$\vef{M}$ if and only if $(y^*,x^*)$ is a critical pair in
$\vef{M^*}$, the primal oriented coloring induces an oriented coloring
in the dual map. Hence, the oriented coloring of a map $M$ with
$\dim(\vef{M}) \leq 3$ must be permissible ``locally'' around each
vertex and face. The question remains whether the existence of such a
locally permissible coloring is also a 
sufficient condition for $\dim(\vef{M}) \leq 3$, or if there are some 
non-local effects that force $\dim(\vef{M})=4$.

\Bitem $M$ is not simple.

\Bitem $M$ is not 2-connected.

\smallskip\ni
  Suppose $M$ is not 2-connected. From $\dim(\vef{C}) \leq 3$ for each
  2-connected component $C$ it can not be concluded that
  $\dim(\vef{M}) \leq 3$. The conclusion is not even possible if all
  components are maximal path-like maps and have a common outer face.
  Consider the map $M$
  constructed by taking two maximal path-like maps $C_1$ and $C_2$ and
  identifying two vertices $v_1 \in C_1$ and $v_2 \in  C_2$ and the
  outer faces of each map. We choose $C_1$ and $C_2$ such that
  $\dim(\vef{C_1}) = \dim(\vef{C_2}) = 3$ and that
  in any permissible coloring of the chordal edges in each map $C_i$ there
  will be two outgoing edges from $v_i$. Such maps
  clearly exist. A straightforward modification of \refprop{prop:2_out}
  will now show that $\dim(\vef{M}) = 4$.

\bigskip
\ni
{\bf\large Vertex-face posets and posets of height 2}
\medskip

For vertex-face posets, we saw that it seems hard to characterize
even the strongly outerplanar 2-connected maps with dimension at most
3. This relates to the long-standing open question if it is
NP-hard to determine if the dimension of a height 2 poset is at most~3. 
Yannakakis \cite{Yannakakis82} proved in 1982 that
it is NP-hard to determine if $\dim(\PP) \leq 3$ for posets $\PP$ of height
at least 3.
Brightwell and Trotter \cite{BT97} refined the question and asked if
it is NP-hard to recognize planar maps with $\dim(\vf{M}) \leq 3$. Given our results,
it makes sense to ask this question even for 2-connected strongly outerplanar maps.

\subsection*{Acknowledgment}
We thank Graham Brightwell for stimulating discussions and a referee
for comments that helped improve the paper.

\small
\bibliographystyle{my-siam}
\bibliography{planar_maps}

\end{document}